 \newtheorem{theorem}{Theorem}[section]
 \newtheorem{corollary}[theorem]{Corollary}
 \newtheorem{lemma}[theorem]{Lemma}
 \theoremstyle{definition}
 \theoremstyle{remark}
 \numberwithin{equation}{section}
\renewcommand{\phi}{\varphi}
\renewcommand{\theta}{\vartheta}
\DeclareMathOperator{\tform}{\mathfrak{t}}
\DeclareMathOperator{\wform}{\mathfrak{w}}
\DeclareMathOperator{\dist}{dist}
\DeclarePairedDelimiterX\sipt[2]{(}{)_{\tform}}{#1\,\delimsize\vert\,#2}
\DeclarePairedDelimiterX\sipv[2]{(}{)_{v}}{#1\,\delimsize\vert\,#2}
\DeclarePairedDelimiterX\sipw[2]{(}{)_{w}}{#1\,\delimsize\vert\,#2}
\newcommand{\alg}{\mathscr{A}}
\newcommand{\abs}[1]{\lvert#1\rvert}
\newcommand{\dupN}{\mathbb{N}}
\newcommand{\seq}[1]{(#1_{n})_{n\in\dupN}}
\newcommand{\dupC}{\mathbb{C}}
\newcommand{\pia}{\pi_A}
\newcommand{\pib}{\pi_B}
\newcommand{\ran}{\operatorname{ran}}
\newcommand{\bee}{\mathscr{B}(E;\anti{E})}
\newcommand{\baa}{\mathscr{B}(\alg;\anti{\alg})}
\newcommand{\D}{\mathscr{D}}
\newcommand{\nt}{N_{\tform}}
\newcommand{\nw}{N_{\wform}}
\newcommand{\ntw}{N_{\tform+\wform}}
\newcommand{\M}{\mathfrak{M}}
\newcommand{\hil}{H}
\newcommand{\hilf}{H_f}
\newcommand{\hila}{H_A}
\newcommand{\hilab}{H_{A+B}}
\newcommand{\hilb}{H_B}
\newcommand{\hilt}{H_{\tform}}
\newcommand{\hilw}{H_{\wform}}
\newcommand{\hiltw}{H_{\tform+\wform}}
\newcommand{\bh}{\mathscr{B}(\hil)}
\DeclarePairedDelimiterX\sip[2]{(}{)}{#1\,\delimsize\vert\,#2}
\DeclarePairedDelimiterX\siptilde[2]{(}{)_{\!_{\widetilde{A}}}}{#1\,\delimsize\vert\,#2}
\DeclarePairedDelimiterX\sipf[2]{(}{)_{f}}{#1\,\delimsize\vert\,#2}
\DeclarePairedDelimiterX\sipg[2]{(}{)_{g}}{#1\,\delimsize\vert\,#2}
\DeclarePairedDelimiterX\siptw[2]{(}{)_{\tform+\wform}}{#1\,\delimsize\vert\,#2}
\DeclarePairedDelimiterX\set[2]{\{}{\}}{#1\,\delimsize\vert\,#2}
\DeclarePairedDelimiterX\dual[2]{\langle}{\rangle}{#1,#2}
\DeclarePairedDelimiterX\sipa[2]{(}{)_{\!_A}}{#1\,\delimsize\vert\,#2}
\DeclarePairedDelimiterX\sipab[2]{(}{)_{\!_{A+B}}}{#1\,\delimsize\vert\,#2}
\DeclarePairedDelimiterX\sipb[2]{(}{)_{\!_B}}{#1\,\delimsize\vert\,#2}
\newcommand{\anti}[1]{\bar{#1}'}
\newcommand{\bidual}[1]{\bar{#1}''}
\newcommand{\ort}[1]{\{#1\}^{\perp}}
\newcommand{\limn}{\lim\limits_{n\rightarrow\infty}}
\begin{document}
\title{On the parallel sum of positive operators, forms, and functionals}

\author[Zs. Tarcsay]{Zsigmond Tarcsay}

\address{%
Department of Applied Analysis,\\ E\"otv\"os L. University,\\ P\'azm\'any P\'eter s\'et\'any 1/c.,\\ Budapest H-1117,\\ Hungary}

\email{tarcsay@cs.elte.hu}

%----------classification, keywords, date
\subjclass{Primary 47B25, 47B65, Secondary 28A12, 46K10}

\keywords{Positive operators, parallel sum, Hilbert space, factorization,  positive functionals, representable functionals}

\begin{abstract}
 The parallel sum $A:B$ of two bounded positive linear operators $A,B$ on a Hilbert space $\hil$ is defined to be  the positive operator having the quadratic form
 \begin{equation*}
 \inf\set{\sip{A(x-y)}{x-y}+\sip{By}{y}}{y\in\hil}
 \end{equation*}
 for fixed $x\in\hil$. The purpose of this paper is to provide a factorization of the parallel sum of the form $J_APJ_A^*$ where $J_A$ is the embedding operator of an auxiliary Hilbert space associated with $A$ and $B$, and $P$ is an orthogonal projection onto a certain linear subspace of that Hilbert space.  We give similar factorizations of the parallel sum  of nonnegative Hermitian forms, positive operators of a complex Banach space $E$ into its topological anti-dual $\anti{E}$, and  of representable positive functionals  on a $^*$-algebra.
\end{abstract}

\maketitle

\section{Introduction}

Let $\hil$ be a complex Hilbert space with inner product $\sip{\cdot}{\cdot}$ and denote by $\bh$ the $C^*$-algebra of all continuous linear operators acting on $\hil$. An operator $A\in\bh$ is called positive, as usual, if its quadratic form is nonnegative, that is,
\begin{equation*}
    \sip{Ax}{x}\geq0,\qquad x\in\hil.
\end{equation*}
If we are given another positive operator $B\in\bh$ then the parallel sum  of $A$ and $B$ can be defined being the (unique) positive operator $A:B$ possessing the quadratic form
\begin{equation}\label{E:ando}
    \sip{(A:B)x}{x}=\inf_{y\in\hil}\{\sip{A(x-y)}{x-y}+\sip{By}{y}\},
\end{equation}
cf. Ando \cite{Ando}. The parallel sum has been introduced first by Anderson and Duffin \cite{Anderson2} for positive definite matrices, i.e., for positive operators on a finite dimensional Hilbert space by setting $A:B=A(A+B)^{-1}B$.  The concept of parallel sum of (not necessarily invertible) positive operators on arbitrary Hilbert spaces is due to Anderson \cite{Anderson3}. This concept has been studied by plenty of authors (see eg. \cite{Ando, fillmore,  pekarev}) and has found a wide range of applications. In the present work we give a new construction of the parallel sum. Namely, we provide $A:B$ via factorization of the form $A:B=J_APJ_A^*$ where $P$ is an orthogonal projection of an auxiliary Hilbert space and $J_A$ is a bounded linear operator from that space into $H$. Our method is based on the observation that the square root of formula \eqref{E:ando} can be regarded as the distance between $Ax$ and $\set{(Ay,By)}{y\in\hil}$ in an appropriate metric. In this section we also provide some further factorizations of the parallel sum. One of these enables us to characterize the range of $(A-(A:B))^{1/2}$.

Ando \cite{Ando} applied the parallel sum as a powerful tool by investigating Lebesgue type decompositions of positive operators. His definition \eqref{E:ando} of the parallel sum however makes only use of the quadratic forms of the operators under consideration (the reader is referred to \cite{Sebestyen_Titkos} or \cite{tarcsay} for a purely operator theoretic approach to the Lebesgue decomposition). This observation made it possible to introduce the parallel sum of two nonnegative Hermitian forms. In fact, if $\tform$ and $\wform$ are forms on a complex vector space $\D$ then
\begin{equation*}
    (\tform:\wform)(x,x):=\inf_{y\in\hil}\{\tform(x-y,x-y)+\wform(y,y)\}, \qquad x\in\D,
\end{equation*}
fulfills the parallelogram law and hence, according to the Jordan--Neumann theorem, $\tform:\wform$ is a form itself, see \cite[Proposition 2.2]{Hassi2009}. In Section 3 we provide $\tform:\wform$ in a different way: it is shown to be the quadratic form of an appropriately chosen positive operator.

A natural generalization of positive operators of Hilbert spaces to Banach space setting is in considering linear operators of a Banach space $E$ to the topological anti-dual $\anti{E}$, satisfying $\dual{Ax}{x}\geq0$. In Section 4 we introduce the parallel sum of positive operators in this setting.

If $f$ and $g$ are positive functionals on a $^*$-algebra $\alg$ then, of course, $\tform_f(a,b):=f(b^*a)$ and $\tform_g(a,b):=g(b^*a)$ define nonnegative Hermitian forms on $\alg$, thus the parallel sum $\tform_f:\tform_g$ of these forms can be considered. Nevertheless, it is not clear if $\tform_f:\tform_g=\tform_h$ holds with some  positive functional $h$. In Section 5, based on  the Gelfand-Neumark-Segal construction we shall show that this is the case when considering representable positive functionals. Furthermore, we give a characterization of singularity of representable positive functionals in terms of the parallel sum.

 Finally, if $\alg$ is a Banach $^*$-algebra, then we can associate two positive operators $A,B:\alg\to\anti{\alg}$ with $f,g$, respectively, via formulas \begin{equation*}
    \dual{Aa}{b}:=f(b^*a),\qquad \dual{Ba}{b}:=g(b^*a).
\end{equation*}
It turns out that the parallel sums of the functionals and their associated operators are closely related: if $\alg$ is unital, then $f:g=(A:B)1$, and if $\alg$ has an  approximative unit $(e_i)_{i\in I}$, then $\displaystyle \lim_{i,I}(A:B)e_i=f:g$ in functional norm.

\section{Parallel sum of bounded positive operators on Hilbert spaces}
Let $\hil$ be a complex Hilbert space with inner product $\sip{\cdot}{\cdot}$. Let $A\in\bh$ be a bounded positive operator on $\hil$, that is to say, a continuous linear mapping of $\hil$ into $\hil$ whose quadratic form $\sip{Ax}{x}$ is nonnegative semidefinite. Recall that $A$ fulfills then the operator Schwarz inequality
\begin{equation}\label{E:opSchwarz}
    \|Ax\|^2\leq \|A\| \sip{Ax}{x},\qquad x\in\hil.
\end{equation}
Based on this inequality, Sebesty\'en \cite{Sebestyen93} offered a useful factorization $J_AJ_A^*$ of $A$ through an auxiliary Hilbert space, cf. also \cite{Sebestyén83a}. As Sebesty\'en's construction plays an essential role throughout the paper we shortly invoke his treatment below.

Let us first associate a Hilbert space $\hila$ with $A$ in the following natural way: Equip the range space $\ran A$ with the inner product
\begin{equation}\label{E:sipA}
    \sipa{Ax}{Ay}:=\sip{Ax}{y},\qquad x,y\in\hil.
\end{equation}
The Hilbert space $\hila$ is obtained as the completion of this inner product space. Let us consider the canonical embedding operator $J_A$ from $\ran A\subseteq \hila$ into $\hil$, defined by the identification
\begin{equation}\label{E:Ja}
    J_A(Ax):=Ax,\qquad x\in\hil.
\end{equation}
It is seen immediately from \eqref{E:opSchwarz}
that $J_A$ is continuous, namely by the norm bound $\sqrt{\|A\|}$. Hence it may be extended uniquely to a continuous linear operator $J_A\in\mathscr{B}(\hila;\hil)$. As $\ran A$ lies dense in $\hila$ by definition, we see from
\begin{equation*}
    \sipa{Ay}{J_A^*x}=\sip{Ay}{x}=\sipa{Ay}{Ax},\qquad x,y\in\hil,
\end{equation*}
that $J_A^*$ admits the following canonical  property:
\begin{equation}\label{E:J*}
    J_A^*x=Ax,\qquad x\in\hil.
\end{equation}
This yields
\begin{equation}\label{E:A=jaja}
    J_AJ_A^*x=J_A(Ax)=Ax,\qquad x\in\hil,
\end{equation}
whence we see that
\begin{equation*}
J_AJ_A^*=A
\end{equation*}

Let another bounded positive operator $B\in\bh$ be given.  By the above construction,  with $A$ replaced by $B$,  we associate a pair $(\hilb, J_B)$ to $B$. Consider the following three operators $J, \widetilde{J}_A$ and $\widetilde{J}_B$ of  $\hila\times \hilb$ to $\hil$, defined by
\begin{align}\label{E:Jhullam}
  \left\{\begin{array}{l}
              J(Ax,By):=  Ax+By, \\
      \widetilde{J}_A(Ax,By):=  Ax,\\
        \widetilde{J}_B(Ax,By):=  By,
  \end{array}\right.
\end{align}
 $x,y\in \hil$.
The reader can easily verify that the corresponding  adjoint operators satisfy
\begin{align}\label{E:J*hullam}
  \left\{\begin{array}{l}
J^*x=(J_A^*x,J_B^*x)=(Ax,Bx),\\
\widetilde{J}_A^*x =(J_A^*x,0)=(Ax,0),\\
\widetilde{J}_B^*x = (0,J_B^*x)=(0,Bx),
\end{array}\right.
\end{align}
for all $x\in \hil$.
Let us denote by $P$ the orthogonal projection of $\hila\times \hilb$ onto $\ort{\ran J^*}$. Then the main result of the section reads as follows:
\begin{theorem}\label{T:parallelsum_bounded}
    Let $\hil$ be a Hilbert space and let $A,B\in\bh$ be bounded positive operators. Then  $\widetilde{J}_AP\widetilde{J}_A^*$ and $\widetilde{J}_BP\widetilde{J}_B^*$ are bounded positive operators on $\hil$ with the same quadratic forms given by
    \begin{equation*}
    \inf\set{\sip{A(x-y)}{x-y}+\sip{By}{y}}{y\in\hil}, \qquad x\in\hil.
    \end{equation*}
    In particular, $\widetilde{J}_AP\widetilde{J}_A^*=\widetilde{J}_BP\widetilde{J}_B^*$.
\end{theorem}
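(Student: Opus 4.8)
The plan is to read the quadratic form of $\widetilde{J}_A P \widetilde{J}_A^{*}$ off as a squared distance. Positivity and boundedness come for free: for any $T\in\mathscr{B}(\hila\times\hilb;\hil)$ one has $\sip{TPT^{*}x}{x}=\|PT^{*}x\|^{2}\geq0$ since $P=P^{*}=P^{2}$, and $\|TPT^{*}\|\leq\|T\|^{2}$; so the whole content is the identification of the form. Using $\widetilde{J}_A^{*}x=(Ax,0)$ from \eqref{E:J*hullam}, and the fact that $P$ is the orthogonal projection onto $\ort{\ran J^{*}}$ so that $I-P$ is the orthogonal projection onto $\overline{\ran J^{*}}$, I would write
\begin{equation*}
    \sip{\widetilde{J}_A P \widetilde{J}_A^{*}x}{x}=\|P(Ax,0)\|^{2}=\dist\big((Ax,0),\overline{\ran J^{*}}\big)^{2}=\inf_{w\in\hil}\big\|(Ax,0)-J^{*}w\big\|^{2},
\end{equation*}
the last equality holding because the distance from a point to a subspace equals its distance to the closure. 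Then, since $J^{*}w=(Aw,Bw)$ by \eqref{E:J*hullam}, the norm under the infimum splits over the two factors as $\|A(x-w)\|_{\hila}^{2}+\|Bw\|_{\hilb}^{2}$, and by the very definition \eqref{E:sipA} of the auxiliary inner products (applied to $A(x-w)\in\ran A$ and to $Bw\in\ran B$) this equals $\sip{A(x-w)}{x-w}+\sip{Bw}{w}$. Taking the infimum over $w\in\hil$ gives exactly the asserted formula.

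For $\widetilde{J}_B P \widetilde{J}_B^{*}$ I would run the same computation with $\widetilde{J}_B^{*}x=(0,Bx)$, obtaining $\inf_{w\in\hil}\{\sip{Aw}{w}+\sip{B(x-w)}{x-w}\}$, and then substitute $w\mapsto x-w$ to see that it agrees with the form of $\widetilde{J}_A P \widetilde{J}_A^{*}$. Finally, $\widetilde{J}_A P \widetilde{J}_A^{*}$ and $\widetilde{J}_B P \widetilde{J}_B^{*}$ are bounded operators on a complex Hilbert space carrying one and the same quadratic form, so the polarization identity forces $\widetilde{J}_A P \widetilde{J}_A^{*}=\widetilde{J}_B P \widetilde{J}_B^{*}$.

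I do not expect a genuine obstacle here; the one step calling for a little care is the passage from $\|P(Ax,0)\|^{2}$ to an infimum over $\ran J^{*}$ — that is, keeping track that $\ort{\ran J^{*}}$ is the orthogonal complement of $\overline{\ran J^{*}}$, that $\ran J^{*}$ need not be closed, and that nonetheless $\dist(v,\ran J^{*})=\dist(v,\overline{\ran J^{*}})$ — after which Sebestyén's identities \eqref{E:J*hullam} together with \eqref{E:sipA} make the rest unwind mechanically.
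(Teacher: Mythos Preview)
Your argument is correct and follows essentially the same route as the paper's own proof: both compute $\sip{\widetilde{J}_AP\widetilde{J}_A^{*}x}{x}=\|P\widetilde{J}_A^{*}x\|^{2}$ as the squared distance from $(Ax,0)$ to $\ran J^{*}$, expand via \eqref{E:J*hullam} and \eqref{E:sipA}, and then handle $\widetilde{J}_BP\widetilde{J}_B^{*}$ by the substitution $w\mapsto x-w$. Your version is, if anything, slightly more explicit about the closure issue and the invocation of polarization, but the strategy is identical.
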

\begin{proof}
    Let us compute $\sip{\widetilde{J}_AP\widetilde{J}_A^*x}{x}$ for fixed $x\in H$:
    \begin{align*}
        \sip{\widetilde{J}_AP\widetilde{J}_A^*x}{x}&=\sip{P\widetilde{J}_A^*x}{P\widetilde{J}_A^*x}_{\hila\times\hilb}=\dist^2(Ax;\ran J^*)\\
         &=\inf\set[\big]{\|(Ax,0)-J^*y\|^2_{\hila\times\hilb}}{y\in\hil}\\
         &=\inf\set[\big]{\|(Ax,0)-(Ay,By)\|^2_{\hila\times\hilb}}{y\in\hil}\\
         &=\inf\set[\big]{\|A(x-y)\|_A^2-\|By\|^2_B}{y\in\hil}\\
         &=\inf\set{\sip{A(x-y)}{x-y}+\sip{By}{y}}{y\in\hil}.
    \end{align*}
    A very similar calculation shows that the quadratic form of $\widetilde{J}_BP\widetilde{J}_B^*$ equals
        \begin{equation*}
    \inf\set{\sip{B(x-y')}{x-y'}+\sip{Ay'}{y'}}{y'\in\hil}, \qquad x\in\hil.
    \end{equation*}
    Replacing $x-y'$ by $y$ above it follows that the quadratic forms of $\widetilde{J}_AP\widetilde{J}_A^*$ and $\widetilde{J}_BP\widetilde{J}_B^*$ are equal, whence $\widetilde{J}_AP\widetilde{J}_A^*=\widetilde{J}_BP\widetilde{J}_B^*$ as well.
\end{proof}
Following the usual terminology we shall call $\widetilde{J}_AP\widetilde{J}_A^*$ the parallel sum of $A$ and $B$ and we denote it by the symbol $A:B$. From the above theorem it turns out that $A:B=B:A$, and that $A:B\leq A, B$.

We are going to present now another construction of the parallel sum. Let $A,B\in\bh$ be  positive operators. Consider the auxiliary Hilbert space $\hilab$ which is obtained as the Hilbert space associated with $A+B$. Let us denote by $J_{A+B}$ the canonical embedding operator of $\ran (A+B)\subseteq\hilab$ into $\hil$, and introduce two further operators from $\hilab$ into $\hil$ by setting
\begin{align}\label{E:S_A}
  \left\{\begin{array}{l}
S_A((A+B)x)=A^{1/2}x,\\
S_B((A+B)x)=B^{1/2}x.
\end{array}\right.
\end{align}
Then both $S_A$ and $S_B$ are linear contractions:
\begin{equation*}
    \|S_A((A+B)x)\|^2=\sip{Ax}{x}\leq \sip{(A+B)x}{x}=\sipab{(A+B)x}{(A+B)x}.
\end{equation*}
It is seen similarly that $S_B$ is a contraction as well. Our first observation is  the following result:
\begin{lemma}\label{L:SASB=J}
    Assume that $A,B$ are bounded positive operators in the Hilbert space $\hil$. Then
\begin{equation*}%\label{SASB=J}
    S_A^*A^{1/2}+S_B^*B^{1/2}=J^*_{A+B}.
\end{equation*}
\end{lemma}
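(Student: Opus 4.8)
The plan is to identify the action of the adjoints $S_A^*$ and $S_B^*$ on the vectors $A^{1/2}x$ and $B^{1/2}x$, and then to test the claimed identity against the dense subset $\ran(A+B)\subseteq\hilab$.

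First I would recall that, exactly as in \eqref{E:J*}, the embedding $J_{A+B}$ satisfies $J_{A+B}^*x=(A+B)x$ for every $x\in\hil$, the right-hand side being understood as an element of $\hilab$. Next, fix $x,w\in\hil$ and compute, using the defining relation \eqref{E:S_A} of $S_A$ together with the inner product \eqref{E:sipA} (with $A+B$ in place of $A$) on $\hilab$,
\[
\sipab{S_A^*A^{1/2}x}{(A+B)w}=\sip{A^{1/2}x}{S_A((A+B)w)}=\sip{A^{1/2}x}{A^{1/2}w}=\sip{Ax}{w}.
\]
An identical computation with $B$ in place of $A$ yields $\sipab{S_B^*B^{1/2}x}{(A+B)w}=\sip{Bx}{w}$. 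Adding the two relations and using
\[
\sip{Ax}{w}+\sip{Bx}{w}=\sip{(A+B)x}{w}=\sipab{(A+B)x}{(A+B)w}=\sipab{J_{A+B}^*x}{(A+B)w},
\]
I obtain that $\sipab{(S_A^*A^{1/2}+S_B^*B^{1/2})x}{(A+B)w}=\sipab{J_{A+B}^*x}{(A+B)w}$ holds for all $w\in\hil$. Since $\ran(A+B)$ is dense in $\hilab$ by construction, the vectors $(S_A^*A^{1/2}+S_B^*B^{1/2})x$ and $J_{A+B}^*x$ coincide; as $x\in\hil$ was arbitrary, the asserted operator identity follows.

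There is essentially no serious obstacle here. The only points requiring a little care are that $S_A$ and $S_B$ are genuinely bounded — which has already been checked in the text, so $S_A^*$ and $S_B^*$ are legitimate operators in $\mathscr{B}(\hil;\hilab)$ — and that the density of $\ran(A+B)$ in $\hilab$ is what licenses the final cancellation. One could alternatively compute $S_A^*$ directly on all of $\hil$, but factoring it through $A^{1/2}$ as above is precisely what makes the three terms line up against the vector $(A+B)w$.
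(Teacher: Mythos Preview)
Your proof is correct and essentially identical to the paper's own argument: both compute $\sipab{S_A^*A^{1/2}x}{(A+B)w}=\sip{Ax}{w}$ (and likewise for $B$), add the two identities, and recognize the sum as $\sipab{J_{A+B}^*x}{(A+B)w}$. The only difference is that you make the appeal to density of $\ran(A+B)$ explicit, whereas the paper leaves it implicit.
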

\begin{proof}
For  $x,y\in\hil$ we have
\begin{gather*}
    \sipab{S_A^*A^{1/2}x}{(A+B)y}=\sip{A^{1/2}x}{A^{1/2}y}=\sip{Ax}{y},
\end{gather*}
and similarly,
\begin{equation*}
    \sipab{S_B^*B^{1/2}x}{(A+B)y}=\sip{B^{1/2}x}{B^{1/2}y}=\sip{Bx}{y}.
\end{equation*}
Hence
\begin{equation*}
    \sipab{S_A^*A^{1/2}x+S_B^*B^{1/2}x}{(A+B)y}=\sip{(A+B)x}{y}=\sipab{J^*_{A+B}x}{(A+B)y},
\end{equation*}
which yields the desired identity.
\end{proof}
\begin{theorem}\label{T:parallelsum2}
    Let $A, B$ be bounded positive operators in the Hilbert space $\hil$. Then
    \begin{equation}\label{E:parallelsum2}
        A:B=A^{1/2}S_AS_B^*B^{1/2}=B^{1/2}S_BS_A^*A^{1/2}.
    \end{equation}
\end{theorem}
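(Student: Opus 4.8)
The plan is to reduce everything to Lemma~\ref{L:SASB=J} and Theorem~\ref{T:parallelsum_bounded}. First I would rewrite the lemma as $S_A^*A^{1/2}=J_{A+B}^*-S_B^*B^{1/2}$ and use this to identify the operator $A^{1/2}S_AS_B^*B^{1/2}$ explicitly. For $x,y\in\hil$,
\[
\sip{A^{1/2}S_AS_B^*B^{1/2}x}{y}=\sipab{S_B^*B^{1/2}x}{S_A^*A^{1/2}y}=\sipab{S_B^*B^{1/2}x}{J_{A+B}^*y}-\sipab{S_B^*B^{1/2}x}{S_B^*B^{1/2}y}.
\]
A direct computation on the dense subspace $\ran(A+B)\subseteq\hilab$ gives $J_{A+B}S_B^*B^{1/2}=B$ (indeed $\sipab{S_B^*B^{1/2}x}{(A+B)h}=\sip{B^{1/2}x}{B^{1/2}h}=\sip{Bx}{h}$), so the first term equals $\sip{Bx}{y}$, while the second equals $\sip{B^{1/2}S_BS_B^*B^{1/2}x}{y}$. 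Hence
\[
A^{1/2}S_AS_B^*B^{1/2}=B-B^{1/2}S_BS_B^*B^{1/2},
\]
which is manifestly self-adjoint; interchanging the roles of $A$ and $B$ (or simply taking adjoints afterwards) gives $B^{1/2}S_BS_A^*A^{1/2}=A-A^{1/2}S_AS_A^*A^{1/2}$.

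It remains to show that $B-B^{1/2}S_BS_B^*B^{1/2}$ coincides with $A:B$; since both are bounded self-adjoint operators, it suffices to compare quadratic forms. We have $\sip{(B-B^{1/2}S_BS_B^*B^{1/2})x}{x}=\sip{Bx}{x}-\|S_B^*B^{1/2}x\|_{\hilab}^2$, and I would evaluate the norm by approximating $S_B^*B^{1/2}x$ by elements of the dense subspace $\ran(A+B)$: expanding
\[
0\leq\|S_B^*B^{1/2}x-(A+B)z\|_{\hilab}^2=\|S_B^*B^{1/2}x\|_{\hilab}^2-2\Re\sip{Bx}{z}+\sip{(A+B)z}{z}
\]
and noting that the left-hand side has infimum $0$ over $z\in\hil$, one obtains $\|S_B^*B^{1/2}x\|_{\hilab}^2=\sup_{z\in\hil}\{2\Re\sip{Bx}{z}-\sip{(A+B)z}{z}\}$. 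Therefore
\[
\sip{(B-B^{1/2}S_BS_B^*B^{1/2})x}{x}=\inf_{z\in\hil}\{\sip{Bx}{x}-2\Re\sip{Bx}{z}+\sip{(A+B)z}{z}\}=\inf_{z\in\hil}\{\sip{B(x-z)}{x-z}+\sip{Az}{z}\},
\]
which by Theorem~\ref{T:parallelsum_bounded} (applied with the roles of $A$ and $B$ interchanged, equivalently using $A:B=B:A$) is exactly $\sip{(A:B)x}{x}$. Hence $A:B=A^{1/2}S_AS_B^*B^{1/2}$, and the second identity in \eqref{E:parallelsum2} follows by taking adjoints, since $A:B$ is self-adjoint and $(A^{1/2}S_AS_B^*B^{1/2})^*=B^{1/2}S_BS_A^*A^{1/2}$.

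I expect the only delicate point to be the treatment of $\|S_B^*B^{1/2}x\|_{\hilab}$: one must work inside the abstract completion $\hilab$ and exploit density of $\ran(A+B)$ to convert the squared norm into the supremum above, after which recognizing the resulting extremal problem as the defining infimum of the parallel sum is immediate. A slightly more structural alternative avoids this computation: introduce the isometry $U\colon\hilab\to\hila\times\hilb$ determined by $U((A+B)x)=(Ax,Bx)$, observe that $\widetilde{J}_BU=B^{1/2}S_B$ and that $I-UU^*$ is precisely the orthogonal projection $P$ of Theorem~\ref{T:parallelsum_bounded} onto $\ort{\ran J^*}$, and then read off $B-B^{1/2}S_BS_B^*B^{1/2}=\widetilde{J}_B(I-UU^*)\widetilde{J}_B^*=\widetilde{J}_BP\widetilde{J}_B^*=A:B$; this bypasses the norm estimate at the cost of setting up $U$ and verifying its properties.
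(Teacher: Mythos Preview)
Your main argument is correct and is essentially the paper's own proof with the roles of $A$ and $B$ interchanged: the paper substitutes $S_B^*B^{1/2}=J_{A+B}^*-S_A^*A^{1/2}$ to obtain $\sip{A^{1/2}S_AS_B^*B^{1/2}x}{x}=\|A^{1/2}x\|^2-\|S_A^*A^{1/2}x\|_{A+B}^2$ and then runs exactly your density-of-$\ran(A+B)$ expansion on $\|S_A^*A^{1/2}x\|_{A+B}^2$, whereas you substitute the symmetric relation and land on $\sip{Bx}{x}-\|S_B^*B^{1/2}x\|_{A+B}^2$; the ensuing infimum computation is identical in structure. Your structural alternative via the isometry $U\colon\hilab\to\hila\times\hilb$, $U((A+B)x)=(Ax,Bx)$, is not in the paper and is a genuinely cleaner route: it identifies $UU^*$ with $I-P$ and $\widetilde{J}_BU$ with $B^{1/2}S_B$, so that Theorem~\ref{T:parallelsum_bounded} gives the result without any extremal computation, at the modest cost of checking that $U$ is isometric and has range $\overline{\ran J^*}$.
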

\begin{proof}
    Let us compute the quadratic form of $A^{1/2}S_AS_B^*B^{1/2}$. By Lemma \ref{L:SASB=J} we have
    \begin{align*}
        \sip{A^{1/2}S_AS_B^*B^{1/2}x}{x}&=\sipab{S_B^*B^{1/2}x}{S_A^*A^{1/2}x}\\
        &=\sipab{J_{A+B}^*x}{S_A^*A^{1/2}x}-\sipab{S_A^*A^{1/2}x}{S_A^*A^{1/2}x}\\
        &=\|A^{1/2}x\|^2-\|S_A^*A^{1/2}x\|^2_{A+B}\geq0,
    \end{align*}
    because $S_A^*$ is contractive. Hence $A^{1/2}S_AS_B^*B^{1/2}$ is a positive operator. Furthermore, since $\ran (A+B)$ is dense in $\hilab$, we conclude that
    \begin{align*}
        0&=\inf_{y\in\hil}\|S_A^*A^{1/2}x+(A+B)y\|_{A+B}^2\\
         &=\|S_A^*A^{1/2}x\|_{A+B}^2+\inf_{y\in\hil}\{\sip{A^{1/2}x}{S_A(A+B)y}\\ &+\sip{S_A(A+B)y}{A^{1/2}x}+\sip{(A+B)y}{y}\}\\
         &=\|S_A^*A^{1/2}x\|_{A+B}^2+\inf_{y\in\hil}\{\sip{Ax}{y}+\sip{Ay}{x}+\sip{Ay}{y}+\sip{By}{y}\}\\
         &=\|S_A^*A^{1/2}x\|_{A+B}^2-\sip{Ax}{x} +\inf_{y\in\hil}\{\sip{A(x-y)}{x-y}+\sip{By}{y}\}\\
         &=\sip{(A:B)x}{x}-\sip{A^{1/2}S_AS_B^*B^{1/2}x}{x},
    \end{align*}
    which, together with the observation that $A^{1/2}S_AS_B^*B^{1/2}$ is symmetric yield \eqref{E:parallelsum2}.
\end{proof}
As an immediate consequence of this factorization we get another formula on the quadratic form of $A:B$, cf. also \cite[Proposition 5.3]{Hassi_Domain}:
\begin{corollary}\label{C:(A:B) quadratic}
    If $A$ and $B$ are bounded positive operators in the Hilbert space $\hil$ then for any $x\in\hil$
    \begin{align*}
        \sip{(A:B)x}{x}&=\sip{Ax}{x}-\sup\set{\abs{\sip{Ax}{y}}^2}{\sip{Ay}{y}+\sip{By}{y}\leq1}\\
                       &=\sip{Bx}{x}-\sup\set{\abs{\sip{Bx}{y}}^2}{\sip{Ay}{y}+\sip{By}{y}\leq1}.
    \end{align*}
\end{corollary}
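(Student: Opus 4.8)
The plan is to extract the claimed identities from the computation carried out inside the proof of Theorem~\ref{T:parallelsum2}. There it was shown in passing that
\begin{equation*}
    \sip{(A:B)x}{x}=\sip{A^{1/2}S_AS_B^*B^{1/2}x}{x}=\|A^{1/2}x\|^2-\|S_A^*A^{1/2}x\|^2_{A+B},
\end{equation*}
and since $\|A^{1/2}x\|^2=\sip{Ax}{x}$, the whole matter reduces to identifying $\|S_A^*A^{1/2}x\|^2_{A+B}$ with the supremum appearing in the statement.

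First I would write the norm of the vector $S_A^*A^{1/2}x\in\hilab$ as a supremum over the dense subspace $\ran(A+B)$ of $\hilab$,
\begin{equation*}
    \|S_A^*A^{1/2}x\|_{A+B}=\sup\set{\abs{\sipab{S_A^*A^{1/2}x}{(A+B)y}}}{y\in\hil,\ \|(A+B)y\|_{A+B}\leq1},
\end{equation*}
and then evaluate the two ingredients. Using the defining relation \eqref{E:S_A} of $S_A$ together with \eqref{E:sipA} applied to $A+B$,
\begin{equation*}
    \sipab{S_A^*A^{1/2}x}{(A+B)y}=\sip{A^{1/2}x}{S_A((A+B)y)}=\sip{A^{1/2}x}{A^{1/2}y}=\sip{Ax}{y},
\end{equation*}
while $\|(A+B)y\|_{A+B}^2=\sipab{(A+B)y}{(A+B)y}=\sip{(A+B)y}{y}=\sip{Ay}{y}+\sip{By}{y}$. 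Substituting these back and squaring yields
\begin{equation*}
    \|S_A^*A^{1/2}x\|^2_{A+B}=\sup\set{\abs{\sip{Ax}{y}}^2}{\sip{Ay}{y}+\sip{By}{y}\leq1},
\end{equation*}
which together with the reduction above gives the first identity; the second follows by interchanging $A$ and $B$ and invoking the symmetry $A:B=B:A$ noted after Theorem~\ref{T:parallelsum_bounded} (the constraint set is symmetric in $A$ and $B$).

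The one place needing a word of care is the harmless enlargement of the index set in the last supremum so as to include the degenerate vectors $y$ with $\sip{Ay}{y}+\sip{By}{y}=0$: for such $y$ the operator Schwarz inequality \eqref{E:opSchwarz} applied to $A+B$ forces $(A+B)y=0$, hence $0\leq\sip{Ay}{y}\leq\sip{(A+B)y}{y}=0$, whence by \eqref{E:opSchwarz} applied to $A$ also $Ay=0$ and $\sip{Ax}{y}=\sip{x}{Ay}=0$; thus these $y$ contribute nothing and may be adjoined to the set $\set{y}{\sip{Ay}{y}+\sip{By}{y}\leq1}$ without affecting the supremum. (If one prefers, one may equally first derive $A-(A:B)=(S_A^*A^{1/2})^*(S_A^*A^{1/2})$ from Lemma~\ref{L:SASB=J} and argue directly from there.) I do not foresee any genuine obstacle: the corollary is in essence a translation of the Hilbert-space norm of $S_A^*A^{1/2}x$ in the auxiliary space $\hilab$ into data living in $\hil$.
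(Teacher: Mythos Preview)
Your argument is correct and essentially identical to the paper's own proof: both start from the identity $\sip{(A:B)x}{x}=\sip{Ax}{x}-\|S_A^*A^{1/2}x\|_{A+B}^2$ extracted from the proof of Theorem~\ref{T:parallelsum2}, then rewrite the auxiliary norm as a supremum over the dense set $\ran(A+B)$ in $\hilab$ and translate both the inner product and the constraint back into $\hil$. Your extra paragraph on the degenerate vectors $y$ with $\sip{(A+B)y}{y}=0$ is a point the paper leaves implicit; it is a harmless refinement rather than a different idea.
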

\begin{proof}
    Since $A:B=B:A$, it suffices to show the first equality. We saw in the proof of Theorem \ref{T:parallelsum2} that
    \begin{equation*}
        \sip{(A:B)x}{x}=\sip{Ax}{x}-\sipab{S_A^*A^{1/2}x}{S_A^*A^{1/2}x}.
    \end{equation*}
    Using the density of $\ran (A+B)$ in $\hilab$, the second term of the right side can be computed as follows
    \begin{gather*}
        \sipab{S_A^*A^{1/2}x}{S_A^*A^{1/2}x}=\\
        \sup\set[\big]{\abs{\sipab{S_A^*A^{1/2}x}{(A+B)y}}^2}{\sip{(A+B)y}{y}\leq1}\\
        =\sup\set{\abs{\sip{A^{1/2}x}{A^{1/2}y}}^2}{\sip{(A+B)y}{y}\leq1}\\
        =\sup\set{\abs{\sip{Ax}{y}}^2}{\sip{Ay}{y}+\sip{By}{y}\leq1}.
    \end{gather*}
    This yields the desired identity follows.
\end{proof}
Note that $A^{1/2}$ and $B^{1/2}$ admit the following factorizations
\begin{equation*}
    A^{1/2}=S_AJ_{A+B}^*\qquad B^{1/2}=S_BJ_{A+B}^*,
\end{equation*}
whence, due to Theorem \ref{T:parallelsum2} we obtain that
\begin{equation*}
    A:B=J_{A+B}\widehat{A}\widehat{B}J_{A+B}^*=J_{A+B}\widehat{B}\widehat{A}J_{A+B}^*,
\end{equation*}
where $\widehat{A}=S_A^*S_A$ and $\widehat{B}=S_B^*S_B$. Furthermore, we claim that
\begin{equation}\label{E:A+B=I}
    \widehat{A}+\widehat{B}=I,
\end{equation}
where $I$ stands for the identity operator of  $\hilab$. Indeed, for $x,y\in\hil$ one has
\begin{gather*}
    \sipab{(\widehat{A}+\widehat{B})(A+B)x}{(A+B)y}\\
    =\sip{S_A(A+B)x}{S_A(A+B)y}+\sip{S_B(A+B)x}{S_B(A+B)y}\\
    =\sip{Ax}{y}+\sip{Bx}{y}=\sipab{(A+B)x}{(A+B)y},
\end{gather*}
which yields \eqref{E:A+B=I}. We obtain therefore  further factorizations of the parallel sum:
\begin{corollary}
    If $A,B\in\bh$ are bounded positive operators on the Hilbert space $\hil$ then
    \begin{equation*}
        A:B=J_{A+B}\widehat{A}(I-\widehat{A})J_{A+B}^*=J_{A+B}\widehat{B}(I-\widehat{B})J_{A+B}^*
    \end{equation*}
\end{corollary}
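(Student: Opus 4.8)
The plan is to read off the two claimed identities directly from material already in hand, so no new machinery is needed. The two relevant facts are: (i) the factorization $A:B=J_{A+B}\widehat{A}\widehat{B}J_{A+B}^{*}=J_{A+B}\widehat{B}\widehat{A}J_{A+B}^{*}$, obtained from Theorem \ref{T:parallelsum2} together with $A^{1/2}=S_{A}J_{A+B}^{*}$ and $B^{1/2}=S_{B}J_{A+B}^{*}$; and (ii) the operator identity \eqref{E:A+B=I}, $\widehat{A}+\widehat{B}=I$ on $\hilab$, where $\widehat{A}=S_{A}^{*}S_{A}$ and $\widehat{B}=S_{B}^{*}S_{B}$.

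First I would rewrite \eqref{E:A+B=I} in the form $\widehat{B}=I-\widehat{A}$ and substitute it into the first factorization in (i), obtaining $A:B=J_{A+B}\widehat{A}(I-\widehat{A})J_{A+B}^{*}$. Then, symmetrically, I would substitute $\widehat{A}=I-\widehat{B}$ into the second factorization in (i), which gives $A:B=J_{A+B}\widehat{B}(I-\widehat{B})J_{A+B}^{*}$. These two lines are exactly the assertion of the corollary, so the argument is complete.

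As a consistency check one may observe that $\widehat{A}(I-\widehat{A})=\widehat{A}-\widehat{A}^{2}=(I-\widehat{A})\widehat{A}=\widehat{B}\widehat{A}$, so the inner operators $\widehat{A}\widehat{B}$ and $\widehat{B}\widehat{A}$ in (i) genuinely coincide, matching the equality of the two expressions for $A:B$ in Theorem \ref{T:parallelsum2}; moreover $\widehat{A}(I-\widehat{A})=\widehat{A}\widehat{B}$ is positive (as $0\le\widehat{A}\le I$), so the displayed operators are positive, as any parallel sum must be. I do not expect any real obstacle here: the entire content of the corollary is the substitution $\widehat{B}=I-\widehat{A}$, and the only step requiring actual work — verifying $\widehat{A}+\widehat{B}=I$ — has already been carried out immediately before the statement.
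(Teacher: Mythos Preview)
Your proposal is correct and matches the paper's approach exactly: the paper's proof simply says ``Immediately follows from our comments above,'' those comments being precisely the factorization $A:B=J_{A+B}\widehat{A}\widehat{B}J_{A+B}^*=J_{A+B}\widehat{B}\widehat{A}J_{A+B}^*$ and the identity $\widehat{A}+\widehat{B}=I$, which you then substitute just as you describe.
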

\begin{proof}
    Immediately follows from our comments above.
\end{proof}
\begin{corollary}
    If $A,B\in\bh$ are bounded positive operators on the Hilbert space $\hil$ then
    \begin{equation*}
        A:B=A^{1/2}(I-S_AS_A^*)A^{1/2}=B^{1/2}(I-S_BS_B^*)B^{1/2}.
    \end{equation*}
    Here, $I$ refers to the identity operator of the original Hilbert space $\hil$.
\end{corollary}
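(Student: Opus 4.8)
The plan is to read this off directly from Theorem~\ref{T:parallelsum2} and Lemma~\ref{L:SASB=J}, without returning to the infimum in \eqref{E:ando}. By \eqref{E:parallelsum2} we have $A:B=A^{1/2}S_AS_B^*B^{1/2}$, and Lemma~\ref{L:SASB=J} gives $S_A^*A^{1/2}+S_B^*B^{1/2}=J_{A+B}^*$, hence $S_B^*B^{1/2}=J_{A+B}^*-S_A^*A^{1/2}$. Substituting this into the factorization of $A:B$ yields
\begin{equation*}
    A:B=A^{1/2}S_A\bigl(J_{A+B}^*-S_A^*A^{1/2}\bigr)=A^{1/2}S_AJ_{A+B}^*-A^{1/2}S_AS_A^*A^{1/2}.
\end{equation*}
Now I would invoke the factorization $A^{1/2}=S_AJ_{A+B}^*$ recorded just before the previous corollary (it follows by comparing $S_A((A+B)x)=A^{1/2}x$ with $J_{A+B}^*x=(A+B)x$), so that $A^{1/2}S_AJ_{A+B}^*=A^{1/2}A^{1/2}=A=A^{1/2}IA^{1/2}$. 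Combining the last two displays gives $A:B=A^{1/2}(I-S_AS_A^*)A^{1/2}$; since $S_A$ is a contraction, $I-S_AS_A^*\geq0$, consistent with $A:B\geq0$. The second equality is obtained in the same way, starting instead from $A:B=B^{1/2}S_BS_A^*A^{1/2}$ and writing $S_A^*A^{1/2}=J_{A+B}^*-S_B^*B^{1/2}$, or simply by appealing to $A:B=B:A$.

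Alternatively, one could descend from the preceding corollary, $A:B=J_{A+B}\widehat{A}(I-\widehat{A})J_{A+B}^*$ with $\widehat{A}=S_A^*S_A$: a one-line computation gives $\widehat{A}(I-\widehat{A})=S_A^*S_A-S_A^*S_AS_A^*S_A=S_A^*(I-S_AS_A^*)S_A$, and then $A^{1/2}=S_AJ_{A+B}^*$ together with its adjoint $J_{A+B}S_A^*=A^{1/2}$ (using that $A^{1/2}$ is self-adjoint) turns $J_{A+B}\widehat{A}(I-\widehat{A})J_{A+B}^*$ into $A^{1/2}(I-S_AS_A^*)A^{1/2}$. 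Either route is equally short; I would present the first since it uses only Theorem~\ref{T:parallelsum2} and Lemma~\ref{L:SASB=J}.

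There is no real obstacle here: the whole content is the algebraic substitution plus the two identities $A^{1/2}=S_AJ_{A+B}^*$ and $S_AJ_{A+B}^*=A^{1/2}$. The only point deserving a moment of care is the bookkeeping of the spaces involved — $S_A,S_B\colon\hilab\to\hil$, $J_{A+B}^*\colon\hil\to\hilab$, $A^{1/2},B^{1/2}\colon\hil\to\hil$ — to be sure each composition is legitimate and that it is $J_{A+B}^*$ (and not $J_{A+B}$) appearing in the relevant slot; boundedness of every factor, each $S_A,S_B$ being a contraction, makes all the manipulations valid on the whole of $\hil$.
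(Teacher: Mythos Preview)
Your argument is correct and amounts to the same identity the paper uses, namely $A:B=A-A^{1/2}S_AS_A^*A^{1/2}$; the paper obtains this by quoting the quadratic-form computation $\sip{(A:B)x}{x}=\sip{Ax}{x}-\|S_A^*A^{1/2}x\|_{A+B}^2$ from inside the proof of Theorem~\ref{T:parallelsum2}, whereas you reach it at the operator level by substituting $S_B^*B^{1/2}=J_{A+B}^*-S_A^*A^{1/2}$ into \eqref{E:parallelsum2} and using $S_AJ_{A+B}^*=A^{1/2}$. The content is identical; your version has the minor advantage of citing only stated results rather than an intermediate step of a proof.
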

\begin{proof}
    For $x,y\in\hil$ we have
    \begin{align*}
        \sip{(A:B)x}{y}&=\sip{Ax}{x}-\sipab{S_A^*A^{1/2}x}{S_A^*A^{1/2}y}\\
                       &=\sip{Ax}{x}-\sip{A^{1/2}S_AS_A^*A^{1/2}x}{y}\\
                       &=\sip{A^{1/2}(I-S_AS_A^*)A^{1/2}x}{y},
    \end{align*}
    whence the first equality is clear. The second one is proved analogously.
\end{proof}
In \cite{fillmore}, Fillmore and Williams proved that the range of $(A:B)^{1/2}$ equals $\ran A^{1/2}\cap\ran B^{1/2}$. The above factorization enables us to  describe the range of the square root of the positive operator $A-(A:B)$.
 \begin{theorem}
    Let $A$ and $B$ be positive operators in the Hilbert space $\hil$ and let $y\in\hil$. The following assertions are equivalent:
    \begin{enumerate}[\upshape (i)]
      \item $y\in\displaystyle\ran(A-A:B)^{1/2}$;
      \item $y=\limn Ax_n$ for a sequence  $\seq{x}$ of $\hil$ with $\seq{A^{1/2}x}$ and $\seq{B^{1/2}x}$ convergent;
      \item The following inequality
      \begin{equation*}
      \abs{\sip{x}{y}}^2\leq m_y\cdot\sup\set{\abs{\sip{Ax}{z}}^2}{\sip{Az}{z}+\sip{Bz}{z}\leq1}
      \end{equation*}
      holds for all $x\in\hil$ with some nonnegative constant $m_y$, depending only on $y$.
    \end{enumerate}
 \end{theorem}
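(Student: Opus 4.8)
The plan is to base everything on the factorization $A-A:B=(A^{1/2}S_A)(A^{1/2}S_A)^*$. This is read off immediately from the corollary $A:B=A^{1/2}(I-S_AS_A^*)A^{1/2}$: writing $A=A^{1/2}IA^{1/2}$ and subtracting gives $A-A:B=A^{1/2}S_AS_A^*A^{1/2}$, and $(A^{1/2}S_A)^*=S_A^*A^{1/2}$ since $A^{1/2}$ is self-adjoint; in particular $A-A:B\geq0$, in accordance with $A:B\leq A$. Invoking the standard identity $\ran(CC^*)^{1/2}=\ran C$ for a bounded operator $C$ between Hilbert spaces (a consequence of Douglas' range inclusion theorem), applied with $C=A^{1/2}S_A\in\mathscr{B}(\hilab;\hil)$, condition (i) becomes $y\in\ran(A^{1/2}S_A)$. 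I would then show separately that this is equivalent to (ii) and to (iii).

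For (i) $\Leftrightarrow$ (ii) I would use that $S_A((A+B)x)=A^{1/2}x$, hence $A^{1/2}S_A((A+B)x)=Ax$ for every $x\in\hil$, that $\ran(A+B)$ is dense in $\hilab$, and the computation
\begin{equation*}
\|(A+B)w\|_{A+B}^2=\sip{(A+B)w}{w}=\|A^{1/2}w\|^2+\|B^{1/2}w\|^2,\qquad w\in\hil,
\end{equation*}
which shows that a sequence $\bigl((A+B)x_n\bigr)_{n\in\dupN}$ is Cauchy in $\hilab$ exactly when $\seq{A^{1/2}x}$ and $\seq{B^{1/2}x}$ are Cauchy, equivalently convergent, in $\hil$. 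Given $y=A^{1/2}S_A\xi$ with $\xi\in\hilab$, one picks $x_n$ with $(A+B)x_n\to\xi$ in $\hilab$; then $\seq{A^{1/2}x}$ and $\seq{B^{1/2}x}$ converge and $Ax_n=A^{1/2}S_A((A+B)x_n)\to A^{1/2}S_A\xi=y$, which is (ii). Conversely, from a sequence as in (ii) the vectors $(A+B)x_n$ form a Cauchy sequence in $\hilab$; calling its limit $\xi$, we get $y=\limn Ax_n=A^{1/2}S_A\xi\in\ran(A^{1/2}S_A)$.

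For (i) $\Leftrightarrow$ (iii) I would first observe that the computation in the proof of Corollary \ref{C:(A:B) quadratic} gives, for every $x\in\hil$,
\begin{equation*}
\sup\set{\abs{\sip{Ax}{z}}^2}{\sip{Az}{z}+\sip{Bz}{z}\leq1}=\|S_A^*A^{1/2}x\|_{A+B}^2=\sip{Ax}{x}-\sip{(A:B)x}{x}=\sip{(A-A:B)x}{x},
\end{equation*}
so (iii) merely states that $\abs{\sip{x}{y}}^2\leq m_y\,\sip{(A-A:B)x}{x}=m_y\|(A-A:B)^{1/2}x\|^2$ for all $x\in\hil$ with some $m_y\geq0$. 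This is the familiar majorization criterion for $y$ to belong to $\ran T^{1/2}$, where $T:=A-A:B$: necessity is the Cauchy--Schwarz inequality, while for sufficiency one checks that $T^{1/2}x\mapsto\sip{x}{y}$ is a well-defined bounded linear functional on $\ran T^{1/2}$ and applies the Riesz representation theorem on $\overline{\ran T^{1/2}}$ to obtain $w\in\hil$ with $\sip{x}{y}=\sip{T^{1/2}x}{w}$ for all $x$, i.e.\ $y=T^{1/2}w$.

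I do not expect a serious obstacle: the main care needed is purely bookkeeping --- keeping the inner product of $\hilab$ apart from that of $\hil$ in the second step, and transcribing the supremum identity from the proof of Corollary \ref{C:(A:B) quadratic} with its dummy variable renamed to avoid a clash with the vector $y$ of the statement. If one wishes to avoid quoting Douglas' theorem, the equality $\ran(A-A:B)^{1/2}=\ran(A^{1/2}S_A)$ can be obtained instead from the coincidence $\|(A-A:B)^{1/2}x\|=\|(A^{1/2}S_A)^*x\|$ for all $x\in\hil$, via the same Riesz-functional argument as in the last step.
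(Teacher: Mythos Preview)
Your argument is correct, but it routes through a different factorization than the paper does. The paper uses $A-A:B=\widetilde{J}_A(I-P)\widetilde{J}_A^*$ (the complement of the projection in Theorem~\ref{T:parallelsum_bounded}, acting on $H_A\times H_B$), so that $\ran(A-A:B)^{1/2}=\widetilde{J}_A\bigl(\overline{\ran J^*}\bigr)$; elements of $\overline{\ran J^*}$ are limits of $(Ax_n,Bx_n)$ in $H_A\times H_B$, which translates directly into convergence of $A^{1/2}x_n$ and $B^{1/2}x_n$ in $H$, and applying $\widetilde{J}_A$ yields $\lim Ax_n$. You instead work through $H_{A+B}$ via $A-A:B=(A^{1/2}S_A)(A^{1/2}S_A)^*$ and the density of $\ran(A+B)$; the Cauchy criterion $\|(A+B)w\|_{A+B}^2=\|A^{1/2}w\|^2+\|B^{1/2}w\|^2$ plays exactly the role that the product-space norm plays in the paper's version. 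Both approaches are available from the factorizations established in Section~2, and each reaches (ii) with comparable effort; your route has the mild advantage of staying in a single auxiliary space rather than the product $H_A\times H_B$. For (i)~$\Leftrightarrow$~(iii) the two proofs coincide in substance: the paper invokes Corollary~\ref{C:(A:B) quadratic} together with Sebesty\'en's range-of-adjoint characterization \cite{Sebestyen83}, while you spell out that same Riesz-functional argument explicitly.
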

 \begin{proof}
    Since $A-(A:B)=\widetilde{J}_A(I-P)\widetilde{J}_A^*$ we find that $y\in\displaystyle\ran(A-(A:B))^{1/2}$ if and only if
    \begin{gather*}
        y\in\ran \widetilde{J}_A(I-P)=\set{z}{\exists (\xi,\eta)\in\overline{\ran J^*}, z=\widetilde{J}_A(\xi,\eta)}\\
                                     =\set{z}{\exists \seq{x}, (Ax_n,Bx_n)\to(\xi,\eta)~\textrm{in}~\hila\times\hilb, Ax_n\to z~\textrm{in}~\hil}\\
                                     =\set{z}{\exists \seq{x}, \seq{A^{1/2}x} ~\textrm{and}~\seq{B^{1/2}x}~\textrm{converge~and~} Ax_n\to z}\\
                                     =\set{\limn Ax_n}{ \seq{A^{1/2}x} ~\textrm{and}~\seq{B^{1/2}x}~\textrm{converge}},
    \end{gather*}
    whence the equivalence of (i) and (ii) is clear. That (i) and (iii) are equivalent follows from Corollary \ref{C:(A:B) quadratic} and due to \cite[Theorem 1]{Sebestyen83} characterizing the range of the adjoint operator.
 \end{proof}
%\section{Parallel sum of unbounded operators}

\section{Parallel sum of nonnegative Hermitian forms}
Throughout this section $\D$ is a complex vector space and $\tform,\wform$ are nonnegative Hermitian forms on it. The parallel sum of $\tform$ and $\wform$ was introduced by Hassi, Sebesty\'en and de Snoo \cite{Hassi2009} who employed it successfully  by developing the Lebesgue decomposition theory of nonnegative sesquilinear forms (for an operator theoretic approach see also \cite{STT}). The main idea in their considerations was in going to the quadratic forms and proving that the mapping
\begin{equation}\label{E:t:w}
    x\mapsto\inf\set{\tform(x-y,x-y)+\wform(y,y)}{y\in\D}
\end{equation}
satisfies the parallelogram identity and hence it is a Hilbert seminorm by the Jordan-Neumann theorem (see \cite[Proposition 2.2]{Hassi2009}). Below, we are going to provide the parallel sum as the quadratic form of an appropriate positive operator.

First of all let us  associate an auxiliary Hilbert space with  $\tform$ in the usual way: let $\nt$ stand for the set $\set{x\in\D}{\tform(x,x)=0}$. Then $\nt$ is  a linear subspace of $\D$ due to the Cauchy-Schwarz inequality. Thus
\begin{equation*}
    \sipt{x+\nt}{y+\nt}:=\tform(x,y),\qquad x,y\in\D
\end{equation*}
defines a scalar product on $\D/\nt$. The Hilbert space $\hilt$ is obtained as the completion of this inner product space.  Hilbert spaces $\hilw$ and $\hiltw$, associated with $\wform$ and $\tform+\wform$, respectively,  are defined analogously. Observe also that correspondences
\begin{align*}
    (x+\ntw,y+\ntw)\mapsto \tform(x,y),\qquad (x+\ntw,y+\ntw)\mapsto\wform(x,y)
\end{align*}
are bounded forms in $\hiltw$. The Riesz representation theorem yields therefore two boun\-ded positive oper\-ators $T,W\in\mathscr{B}(\hiltw)$ which satisfy
\begin{align*}
    \siptw{T(x+\ntw)}{y+\ntw}&=\tform(x,y), \\  \siptw{W(x+\ntw)}{y+\ntw}&=\wform(x,y).
\end{align*}
Let us consider the product Hilbert space $\hilt\times\hilw$ and define three bounded operators $J_T, J_W$ and $J$ from $\hilt\times\hilw$ into $\hiltw$, as follows:
\begin{align}\label{E:Thullam}
  \left\{\begin{array}{l}
              J(x+\nt,y+\nw):=  T(x+\ntw)+W(y+\ntw), \\
      J_T(x+\nt,y+\nw):=  T(x+\ntw),\\
       J_W(x+\nt,y+\nw):=  W(y+\ntw),
  \end{array}\right.
\end{align}
for $x,y\in\D.$ The reader can easily verify that the adjoint operators $J^*,J_T^*$ and $J_W^*$ satisfy
\begin{align}\label{E:Thullam_adjoint}
  \left\{\begin{array}{l}
              J^*(x+\ntw)=(x+\nt,x+\nw), \\
      J_T^*(x+\ntw)=(x+\nt,0),\\
       J_W^*(x+\ntw)=(0,x+\nw),
  \end{array}\right.
\end{align}
for $x\in\D$. Let $P$ stand for the orthogonal projection of $\hilt\times\hilw$ onto $\ort{\ran J^*}$. The main result of this section reads us follows:
\begin{theorem}\label{T:parallelsum_forms}
    The positive operators $J_TPJ_T^*$ and $J_WPJ_W^*$ are equal and the form
    \begin{equation*}
        (\tform:\wform)(x,y):=\siptw{J_TPJ_T^*(x+\ntw)}{y+\ntw}
    \end{equation*}
    satisfies
    \begin{equation*}
    (\tform:\wform)(x,x)=\inf\set{\tform(x-y,x-y)+\wform(y,y)}{y\in\D}.
    \end{equation*}
\end{theorem}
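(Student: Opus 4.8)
The plan is to imitate, essentially line for line, the proof of Theorem~\ref{T:parallelsum_bounded}, but carried out inside the auxiliary space $\hiltw$ with the triple $\widetilde J_A,\widetilde J_B,J$ replaced by $J_T,J_W,J$ from \eqref{E:Thullam}. To begin with, since $P=P^*=P^2$, the operators $J_TPJ_T^*=(PJ_T^*)^*(PJ_T^*)$ and $J_WPJ_W^*=(PJ_W^*)^*(PJ_W^*)$ are bounded positive operators on $\hiltw$; consequently the right-hand side of the displayed definition of $\tform:\wform$ depends on $x$ only through the coset $x+\ntw$ and defines a nonnegative Hermitian form on $\D$, so the statement makes sense.

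Next I would fix $x\in\D$ and compute the quadratic form. Since $P$ is the orthogonal projection onto $\ort{\ran J^*}$,
\begin{align*}
\siptw{J_TPJ_T^*(x+\ntw)}{x+\ntw}&=\|PJ_T^*(x+\ntw)\|^2_{\hilt\times\hilw}=\dist^2(J_T^*(x+\ntw);\ran J^*).
\end{align*}
The set $\set{(y+\nt,y+\nw)}{y\in\D}$ is the image of the dense set $\set{y+\ntw}{y\in\D}$ under the continuous map $J^*$, hence it is dense in $\overline{\ran J^*}$, so the distance above is unchanged if we minimize over this set instead. Inserting the adjoint formulas \eqref{E:Thullam_adjoint}, i.e.\ $J_T^*(x+\ntw)=(x+\nt,0)$ and $J^*(y+\ntw)=(y+\nt,y+\nw)$, and using $\|u+\nt\|^2_{\hilt}=\tform(u,u)$, $\|u+\nw\|^2_{\hilw}=\wform(u,u)$, this becomes
\begin{align*}
\siptw{J_TPJ_T^*(x+\ntw)}{x+\ntw}&=\inf_{y\in\D}\|(x+\nt,0)-(y+\nt,y+\nw)\|^2_{\hilt\times\hilw}\\
&=\inf_{y\in\D}\bigl(\|(x-y)+\nt\|^2_{\hilt}+\|{-y}+\nw\|^2_{\hilw}\bigr)\\
&=\inf\set{\tform(x-y,x-y)+\wform(y,y)}{y\in\D},
\end{align*}
which is exactly the asserted value. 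Running the identical computation with $J_W$, $J_W^*(x+\ntw)=(0,x+\nw)$ in place of $J_T$, $J_T^*$ produces $\inf\set{\tform(y,y)+\wform(x-y,x-y)}{y\in\D}$, and the change of variable $y\mapsto x-y$ shows that the two infima coincide.

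It then remains to pass from the quadratic forms to the operators. The bounded self-adjoint operators $J_TPJ_T^*$ and $J_WPJ_W^*$ have the same quadratic form on the dense subset $\set{x+\ntw}{x\in\D}$ of $\hiltw$; hence their difference is a bounded self-adjoint operator whose quadratic form vanishes on a dense set, therefore identically, so $J_TPJ_T^*=J_WPJ_W^*$, and the form identity of the theorem follows by polarization. I do not anticipate any genuine obstacle: the computation is a transcription of the Hilbert-space case, and the only points that need a little care are the verification of the adjoint relations \eqref{E:Thullam_adjoint} (which legitimizes the substitutions $J_T^*(x+\ntw)=(x+\nt,0)$ and $J^*(y+\ntw)=(y+\nt,y+\nw)$) and the observation that $\ran J^*$ may be replaced by its dense subset $\set{(y+\nt,y+\nw)}{y\in\D}$ without altering the distance, which is precisely what turns the abstract distance into the infimum over $y\in\D$.
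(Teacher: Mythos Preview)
Your proposal is correct and follows essentially the same route as the paper: compute $\siptw{J_TPJ_T^*(x+\ntw)}{x+\ntw}=\|PJ_T^*(x+\ntw)\|^2=\dist^2((x+\nt,0);\ran J^*)$, rewrite this as the asserted infimum, repeat with $J_W$, and conclude equality of the operators from equality of the quadratic forms. You are simply more explicit than the paper about the density step (replacing $\ran J^*$ by $\set{(y+\nt,y+\nw)}{y\in\D}$) and about passing from a dense set to the whole of $\hiltw$, but these are the same ideas the paper uses implicitly.
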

\begin{proof}
    Let us compute the quadratic form of  $(\tform:\wform)$:
    \begin{gather*}
        (\tform:\wform)(x,x)=\siptw{J_TPJ_T^*(x+\ntw)}{x+\ntw}=\|PJ_T^*(x+\ntw)\|^2\\
        =\dist^2((x+\nt,0);\ran J^*)=\inf\set{\|(x-y+\nt,-y+\nw)\|^2}{y\in\D}\\
        =\inf\set{\tform(x-y,x-y)+\wform(y,y)}{y\in\D},
    \end{gather*}
    as it is stated. A very similar calculation shows that
    \begin{equation*}
        \siptw{J_WPJ_W^*(x+\ntw)}{x+\ntw}=(\tform:\wform)(x,x),
    \end{equation*}
    whence $J_TPJ_T^*=J_WPJ_W^*$, indeed.
\end{proof}
\section{Parallel sum of positive operators of a Banach space into the anti-dual}\label{S:banach}

Let $E$ be a complex Banach space and denote by $\anti{E}$ the topological antidual space of $E$. That is to say, $\anti{E}$ consists of all continuous conjugate linear functionals of $E$ to $\dupC$. For given $f\in\anti{E}$ and $x\in E$ we shall use the notation $\dual{f}{x}=f(x)$. Observe that $E$ is a Banach space with respect to the naturally induced functional norm
 \begin{equation*}
    \|f\|:=\sup\set{\abs{\dual{f}{x}}}{x\in E,\|x\|\leq1}.
 \end{equation*}
 Following the terminology of \cite{SSZT} we shall call an (everywhere defined continuous) operator $A:E\to \anti{E}$ positive if the quadratic form of $A$ (with respect to the antiduality) is nonnegative definite:
 \begin{equation*}
    \dual{Ax}{x}\geq0,\qquad x\in E.
 \end{equation*}
 Observe immediately that any positive operator $A:E\to\anti{E}$ is symmetric in the following sense:
 \begin{equation*}
    \dual{Ax}{y}=\overline{\dual{Ay}{x}},\qquad x,y\in\hil.
 \end{equation*}
 This means that $\tform_A(x,y):=\dual{Ax}{y}$ defines a nonnegative Hermitian form on $E$. If we are given an other positive operator $B\in\bee$, we can define the parallel sum of the forms $\tform_A$ and $\tform_B$ in accordance with Theorem \ref{T:parallelsum_forms}. Nevertheless, it is not clear at first look whether or not  $\tform_A:\tform_B=\tform_{A:B}$ holds for some positive operator $A:B\in\bee$, which could be called the parallel sum of $A$ and $B$. The purpose of this section is to prove that this is the case.

 The behavior of positive operators in this Banach space setting is very similar to that of Hilbert space operators. Therefore the vast majority of what has been said in Section 2 can be transferred with minor changes to this setting. The main goal of this section is to give an analogue of Theorem \ref{T:parallelsum_bounded} via a very similar factorization method.

 To begin with we recall briefly the construction of \cite[Theorem 3.1]{SSZT}: Equip first the range space $\ran A$ of the positive operator $A\in\bee$ by the following inner product
 \begin{equation*}
    \sipa{Ax}{Ay}:=\dual{Ax}{y},\qquad x,y\in E.
 \end{equation*}
In accordance with the former denotations, let $\hila$ stand for the Hilbert completion of that inner product space. As before, we consider the canonical mapping
\begin{equation*}
    J_A:\hila\supseteq\ran A\to \anti{E},\qquad Ax\mapsto Ax,\qquad x\in E,
\end{equation*}
which turns out to be continuous according to the Banach space variant of the Schwarz inequality \cite[Lemma 2.3]{SSZT}:
\begin{equation*}
    \|Ax\|^2_{\anti{E}}\leq\|A\|\dual{Ax}{x}.
\end{equation*}
The adjoint operator $J_A^*$, acting between the anti-bidual $\bidual{E}$ and $\hila$, fulfills then the following canonical property:
\begin{equation}\label{E:J^*j}
    (J_A^*\circ j_E)(x)=Ax,\qquad x\in E,
\end{equation}
where $j_E:E\to\bidual{E}$ is the linear isometry
\begin{equation*}
    x\mapsto \widehat{x},\qquad \widehat{x}(f):=\overline{f(x)},\qquad f\in\anti{E}.
\end{equation*}
We obtain therefore the following factorization on $A$:
\begin{equation}\label{E:A=JJ*j}
    A=J_AJ_A^*\circ j_E.
\end{equation}

Due to this construction we can introduce the parallel sum of positive operators in this Banach space setting: let another positive operator $B\in\bee$ be given and define the pari $(H_B, J_B)$ as above. Define the operators $J$, $\widetilde{J}_A$ and $\widetilde{J}_B$ from $\hila\times \hilb$ to $\anti{E}$ by the formulae \eqref{E:Jhullam}. Using \eqref{E:J^*j} one can show that the corresponding adjoint operators satisfy
\begin{align}\label{E:J*hullamj}
  \left\{\begin{array}{l}
(J^*\circ j_E)(x)=(Ax,Bx),\\
(\widetilde{J}_A^*\circ j_E)(x) =(Ax,0),\\
(\widetilde{J}_B^*\circ j_E)(x)= (0,Bx).
\end{array}\right.
\end{align}
Let $P$ be the orthogonal projection onto $\ort{\ran (J^*\circ j_E)}$. Then we have the following Banach space variant of Theorem \ref{T:parallelsum_bounded}:
\begin{theorem}\label{T:parallelsum_banach}
    Let $E$ be a complex Banach space and let $A,B\in\bee$ be bounded positive operators. Then positive operators $\widetilde{J}_AP\widetilde{J}_A^*\circ j_E$ and $\widetilde{J}_BP\widetilde{J}_B^*\circ j_E$ are equal such that
    \begin{equation*}
    \dual[\big]{(\widetilde{J}_AP\widetilde{J}_A^*\circ j_E)(x)}{x}=\inf\set{\dual{A(x-y)}{x-y}+\dual{By}{y}}{y\in\hil}.
    \end{equation*}
\end{theorem}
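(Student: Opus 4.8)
The plan is to follow the proof of Theorem \ref{T:parallelsum_bounded} almost verbatim, the only novelty being the bookkeeping around the canonical embedding $j_E$. Fix $x\in E$ and evaluate $\dual{(\widetilde{J}_AP\widetilde{J}_A^*\circ j_E)(x)}{x}$. Since $P$ is an orthogonal projection it is idempotent and self-adjoint, so, relying on the adjoint relation $\dual{\widetilde{J}_A\eta}{x}=\sip{\eta}{\widetilde{J}_A^*j_E(x)}_{\hila\times\hilb}$ (which is precisely how \eqref{E:J*hullamj} is read off from \eqref{E:J^*j}), this quadratic form collapses to
\[
\dual{(\widetilde{J}_AP\widetilde{J}_A^*\circ j_E)(x)}{x}=\|P\widetilde{J}_A^*j_E(x)\|^2_{\hila\times\hilb}=\dist^2\bigl(\widetilde{J}_A^*j_E(x);\ran(J^*\circ j_E)\bigr),
\]
the distance being unchanged upon passing to the closure. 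Substituting $\widetilde{J}_A^*j_E(x)=(Ax,0)$ and $\ran(J^*\circ j_E)=\set{(Ay,By)}{y\in E}$ from \eqref{E:J*hullamj}, expanding the norm of $(A(x-y),-By)$ in $\hila\times\hilb$, and using $\sipa{Au}{Au}=\dual{Au}{u}$ and $\sipb{Bu}{Bu}=\dual{Bu}{u}$, the right-hand side becomes $\inf_{y\in E}\{\dual{A(x-y)}{x-y}+\dual{By}{y}\}$. In particular it is nonnegative, so $\widetilde{J}_AP\widetilde{J}_A^*\circ j_E$ is a positive operator from $E$ into $\anti{E}$.

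An entirely symmetric computation gives $\dual{(\widetilde{J}_BP\widetilde{J}_B^*\circ j_E)(x)}{x}=\inf_{y'\in E}\{\dual{B(x-y')}{x-y'}+\dual{Ay'}{y'}\}$, and the substitution $y'=x-y$ identifies this infimum with the previous one; hence the two positive operators share the same quadratic form. It remains to conclude that they are equal, for which I would use that a positive operator $C\in\bee$ is determined by its quadratic form: the associated sesquilinear map $\tform_C(u,v)=\dual{Cu}{v}$ is a nonnegative Hermitian form (the symmetry $\dual{Cu}{v}=\overline{\dual{Cv}{u}}$ having been noted just after the definition of positivity), hence recovered from $u\mapsto\dual{Cu}{u}$ by polarization; this in turn pins down $Cu\in\anti{E}$ for every $u\in E$. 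Applying this to the two operators yields $\widetilde{J}_AP\widetilde{J}_A^*\circ j_E=\widetilde{J}_BP\widetilde{J}_B^*\circ j_E$, completing the proof.

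I do not expect a genuine obstacle; the argument is a transcription of the Hilbert-space case. The one point deserving care is the initial reduction: one must verify that $\widetilde{J}_A^*\circ j_E$ really acts by $x\mapsto(Ax,0)$ — bearing in mind that $\widetilde{J}_A^*$ is a priori only defined on the anti-bidual $\bidual{E}$ — and that the pairing $\dual{\widetilde{J}_A\eta}{x}$ matches $\sip{\eta}{\widetilde{J}_A^*j_E(x)}_{\hila\times\hilb}$ \emph{without} a stray complex conjugate. Both follow from \eqref{E:J^*j}, the linearity of the isometry $j_E$, and the definition of the adjoint of an operator mapping into the anti-dual of a Hilbert space. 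The remaining ingredients — idempotence and self-adjointness of $P$, the fact that the distance to a subspace equals the distance to its closure, and the polarization identity — are entirely standard.
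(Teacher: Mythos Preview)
Your proposal is correct and matches the paper's approach exactly: the paper itself omits the proof, saying only that it ``is very similar to that of Theorem \ref{T:parallelsum_bounded}, we leave it therefore to the reader,'' and what you have written is precisely that transcription, with the necessary bookkeeping for $j_E$ and the anti-dual pairing handled carefully. There is nothing to add.
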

\noindent The proof is very similar to that of Theorem \ref{T:parallelsum_bounded}, we leave it therefore to the reader.

\section{Parallel sum of representable functionals on a $^*$-algebra}
In this section we attempt to define the parallel sum of two representable positive functionals on a (not necessarily unital) $^*$-algebra $\alg$. Our approach is close to that employed in Section \ref{S:banach}.

Recall that a linear functional $f$ defined on a $^*$-algebra $\alg$ is called positive if $f(a^*a)\geq0$ for all $a\in\alg$. Positive  functionals play a particularly important role in the representation theory of $^*$-algebras. Functionals which may be written in the form
\begin{equation*}
    f(a)=\sip{\pi(a)\zeta}{\zeta},\qquad a\in\alg,
\end{equation*}
where $\sip{\cdot}{\cdot}$ is the inner product of some Hilbert space $\hil$ and $\pi$ is a $^*$-representation of $\alg$ in $\bh$, are of certain interest. Such positive functionals used to be called representable. The representability of a positive functional $f$ depends on the following two properties (see eg. \cite{Sebestyen84}): there exists a constant $C\geq0$ such that
\begin{gather}\label{E:cyclic}
    \abs{f(a)}^2\leq Cf(a^*a),\qquad a\in\alg,
\end{gather}
and for any $a\in\alg$ there exists $M_a\geq0$ such that
\begin{gather}\label{E:representable}
    f(b^*a^*ab)\leq M_{a}f(b^*b),\qquad b\in\alg.
\end{gather}
In that case, the well known Gelfand-Naimark-Segal (GNS) construction provides a Hilbert space $\hilf$, a (cyclic) vector $\zeta_f$ and a $^*$-representation $\pi_f$ of $\alg$ into $\hilf$ such that
\begin{equation*}
    f(a)=\sipf{\pi_f(a)\zeta_f}{\zeta_f},\qquad a\in\alg.
\end{equation*}

Let $f$ and $g$ be positive representable functionals on the $^*$-algebra $\alg$. Let us introduce the forms $\tform_f$ and $\tform_g$, associated with $f$ and $g$ by setting
\begin{equation*}
    \tform_f(a,b):=f(b^*a),\qquad \tform_g(a,b):=g(b^*a),\qquad a,b\in\alg.
\end{equation*}
Clearly, both $\tform_f$ and $\tform_g$ are nonnegative Hermitian forms on the vector space $\alg$, hence we may define their parallel sum $\tform_f:\tform_g$, according to Theorem \ref{T:parallelsum_forms}. However, it is not clear if there exists a positive functional, say $f:g$, such that $\tform_f:\tform_g=\tform_{f:g}$. And if there does exist such an $f:g$, why should it be representable? The main goal of this section is to give positive answers to these questions:
\begin{theorem}\label{T:f:g}
    Let $f$ and $g$ be representable positive functionals on the $^*$-algebra $\alg$. Denote by $(\hilf,\pi_f,\zeta_f)$ and $(\hil_g,\pi_g,\zeta_g)$ the GNS-triples associated with $f$ and $g$, respectively. Let $\pi$ stand for the direct sum of $\pi_f$ and $\pi_g$. Then the closure of following subspace
     \begin{equation}\label{E:M}
        \set{\pi_f(a)\zeta_f\oplus\pi_g(a)\zeta_g}{a\in\alg}\subseteq\hilf\oplus\hil_g
     \end{equation}
     is $\pi$-invariant. If $P$ stands for the orthogonal projection onto its ortho-comp\-lement then the representable functional $f:g$ defined  by the correspondence
     \begin{equation}\label{E:f:g_def}
        (f:g)(a):=\sip{\pi(a)P(\zeta_f\oplus0)}{P(\zeta_f\oplus0)},\qquad a\in\alg,
     \end{equation}
     satisfies
     \begin{equation*}
        (f:g)(a^*a)=\inf\set{f((a-b)^*(a-b))+g(b^*b)}{b\in\alg},\qquad a\in\alg.
     \end{equation*}
\end{theorem}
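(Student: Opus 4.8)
The plan is to imitate the argument of Theorem~\ref{T:parallelsum_bounded} inside the direct-sum GNS Hilbert space $\hilf\oplus\hil_g$, treating $\pi$ as a $^*$-representation and the vector $\zeta_f\oplus 0$ in place of $\widetilde{J}_A^*x$. Write $\M$ for the subspace \eqref{E:M} and $\overline{\M}$ for its closure; let $P$ be the orthogonal projection onto $\overline{\M}^{\perp}$, and set $\xi_a:=\pi_f(a)\zeta_f\oplus\pi_g(a)\zeta_g\in\M$, so that $\M=\set{\xi_a}{a\in\alg}$. First I would check the $\pi$-invariance of $\overline{\M}$: since $\pi(b)\xi_a=\pi_f(ba)\zeta_f\oplus\pi_g(ba)\zeta_g=\xi_{ba}\in\M$, the subspace $\M$ is $\pi(\alg)$-invariant, and because each $\pi(b)$ is bounded, $\overline{\M}$ is invariant as well. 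Consequently $\overline{\M}^{\perp}$ is invariant under $\pi(b^*)$ for all $b$, i.e.\ under $\pi(\alg)$, so $P$ commutes with $\pi(a)$ for every $a\in\alg$; equivalently $P\pi(a)=P\pi(a)P$ and $\pi(a)P=P\pi(a)P$.

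Next I would verify that $f:g$ as defined in \eqref{E:f:g_def} is representable. Since $P$ commutes with $\pi(a)$, one has $(f:g)(a)=\sip{\pi(a)P(\zeta_f\oplus0)}{P(\zeta_f\oplus0)}=\sip{\pi(a)\eta}{\eta}$ with $\eta:=P(\zeta_f\oplus0)$, so $f:g$ is a vector functional of the $^*$-representation $\pi$ of $\alg$ on $\hilf\oplus\hil_g$, and vector functionals of $^*$-representations are exactly the representable positive functionals (this is the characterization recalled before \eqref{E:cyclic}–\eqref{E:representable}); alternatively one checks \eqref{E:cyclic} and \eqref{E:representable} directly using $\|\pi(a)\|$ and $\|\pi(b^*)\|$.

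The core computation is the quadratic-form identity. Fix $a\in\alg$ and decompose $\zeta_f\oplus0=(I-P)(\zeta_f\oplus0)+P(\zeta_f\oplus0)$, where $(I-P)$ is the projection onto $\overline{\M}$. Then
\begin{align*}
(f:g)(a^*a)&=\sip{\pi(a^*a)P(\zeta_f\oplus0)}{P(\zeta_f\oplus0)}=\|\pi(a)P(\zeta_f\oplus0)\|^2\\
&=\|P\pi(a)(\zeta_f\oplus0)\|^2=\|P\,(\pi_f(a)\zeta_f\oplus0)\|^2=\dist^2\big(\pi_f(a)\zeta_f\oplus0;\ \overline{\M}\big)\\
&=\inf_{b\in\alg}\big\|\,(\pi_f(a)\zeta_f\oplus0)-\xi_b\,\big\|^2=\inf_{b\in\alg}\big\|\,\pi_f(a-b)\zeta_f\oplus(-\pi_g(b)\zeta_g)\,\big\|^2\\
&=\inf_{b\in\alg}\big\{\|\pi_f(a-b)\zeta_f\|_f^2+\|\pi_g(b)\zeta_g\|_g^2\big\}=\inf_{b\in\alg}\big\{f((a-b)^*(a-b))+g(b^*b)\big\},
\end{align*}
where the last line uses $\|\pi_f(c)\zeta_f\|_f^2=f(c^*c)$ and $\|\pi_g(c)\zeta_g\|_g^2=g(c^*c)$ from the GNS construction, and the step $\pi(a)(\zeta_f\oplus0)=\pi_f(a)\zeta_f\oplus0$ uses $\pi=\pi_f\oplus\pi_g$. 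This is exactly the asserted formula, so $\tform_{f:g}$ agrees with $\tform_f:\tform_g$ on the diagonal, hence everywhere by polarization.

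The only genuinely delicate point is the representability (and well-definedness) of $f:g$: one must make sure the formula \eqref{E:f:g_def} does not secretly depend on a choice, and that the resulting functional really satisfies both \eqref{E:cyclic} and \eqref{E:representable}. Identifying $f:g$ as a vector functional of the honest $^*$-representation $\pi$ disposes of this cleanly, since then representability is automatic; the $\pi$-invariance of $\overline{\M}$ established in the first step is precisely what makes $P$ commute with $\pi$, which is the linchpin of both that identification and the manipulation $\pi(a)P=P\pi(a)$ used in the main computation. Everything else is a direct transcription of the Hilbert-space proof.
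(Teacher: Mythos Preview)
Your proof is correct and follows essentially the same route as the paper: establish $\pi$-invariance of $\overline{\M}$ (hence of $\overline{\M}^{\perp}$), use this to pass $P$ through $\pi(a)$, and then identify $(f:g)(a^*a)$ with $\dist^2(\pi_f(a)\zeta_f\oplus0;\M)$ exactly as in Theorem~\ref{T:parallelsum_bounded}. Your treatment is in fact slightly more explicit than the paper's, since you spell out why $P$ commutes with $\pi(\alg)$ and why $f:g$, being a vector functional of the $^*$-representation $\pi$, is automatically representable.
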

\begin{proof}
    We  proceed  as  in  the  proof  of Theorem  \ref{T:parallelsum_bounded}  after  making  the
 observation  that  the closure $\M$ of \eqref{E:M} and $\M^{\perp}$ are both $\pi$-invariant subspaces of $\hil_f\oplus\hil_g$. Indeed, for if $x\in\alg$ then
 \begin{equation*}
    \pi(x)(\pi_f(a)\zeta_f\oplus\pi_g(a)\zeta_g)=\pi_f(xa)\zeta_f\oplus\pi_g(xa)\zeta_g\in\M
 \end{equation*}
  for all $a\in\alg$. Hence the $\pi$-invariancy of  $\M$ clear. Since $\pi$ is a $^*$-representation, we conclude that $\M^{\perp}$ is $\pi$-invariant as well. Let now $a\in\alg$; then by $\pi$-invariancy of $\M^{\perp}$ we have
 \begin{align*}
    (f:g)(a^*a)&=\sip{\pi(a)P(\zeta_f\oplus0)}{\pi(a)P(\zeta_f\oplus0)}\\
               &=\sip{P\pi(a)(\zeta_f\oplus0)}{P\pi(a)(\zeta_f\oplus0)}\\
               &=\dist^2(\pi_f(a)\zeta_f\oplus0;\M)\\
               &=\inf\set{\|\pi_f(a)\zeta_f\oplus0-\pi_f(b)\zeta_f\oplus\pi_g(b)\zeta_g\|^2}{b\in\alg}\\
               &=\inf\set{\|\pi_f(a-b)\zeta_f\|_f^2+\|\pi_f(b)\zeta_f\|_g^2}{b\in\alg}\\
               &=\inf\set{f((a-b)^*(a-b))+g(b^*b)}{b\in\alg},
 \end{align*}
 as it is claimed.
\end{proof}

The representable positive functional $f:g$ above is called the parallel sum of $f$ and $g$. Just like in the former cases the reader can easily verify that $(f:g)(a^*a)=(g:f)(a^*a)$ for all $a\in\alg$, whence $f:g=g:f$. (Note that this conclusion uses the fact that the functionals under consideration are representable.) Consequently, we have
\begin{equation*}
        (f:g)(a)=\sip{\pi(a)P(0\oplus\zeta_g)}{P(0\oplus\zeta_g)},\qquad a\in\alg.
\end{equation*}

Let us recall the notions of singularity and semi-singularity in the context of representable functionals: $f$ and $g$ are called mutually singular if $h=0$ is the only representable functional which is simultaneously dominated by $f$ and $g$. Furthermore, $g$ is called $f$-semisingular if there exists a sequence $\seq{a}$ in $\alg$ such that $f(a_n^*a_n)\to0$, $g((a_n-a_m)^*((a_n-a_m)))\to0$ and $ g(a_n^*a)\to g(a)$ for any $a\in\alg$. Sz\H ucs \cite{szucssing} showed that the concepts of singularity and semisingularity coincide. In the next result, by using the concept of the parallel sum, we give an independent proof of this result and provide a Hilbert space geometric characterization of  singularity.
\begin{theorem}
    Let $f$ and $g$ be representable positive functionals on a $^*$-algebra $\alg$, with associated GNS-triples $(\hilf,\pi_f,\zeta_f)$ and $(\hil_g,\pi_g,\zeta_g)$, respectively. Let $\M$ denote the closure of \eqref{E:M}. Then the following assertions are equivalent:
   % \begin{multicols}{2}
    \begin{enumerate}[\upshape (i)]
      \item $f:g=0$;
      \item $\zeta_f\oplus0\in\M$;
      \item $0\oplus\zeta_g\in\M$;
      \item $\M=\hil_f\oplus\hil_g$;
      \item $\zeta_f\oplus\zeta_g$ is a cyclic vector for $\pi_f\oplus\pi_g$;
      \item $f$ and $g$ are mutually singular;
      \item $g$ is $f$-semisingular;
      \item $f$ is $g$-semisingular.
    \end{enumerate}
    %\end{multicols}
\end{theorem}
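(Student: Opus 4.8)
The plan is to prove the ``geometric core'' (i)$\Leftrightarrow$(ii)$\Leftrightarrow$(iii)$\Leftrightarrow$(iv)$\Leftrightarrow$(v) by unwinding the definitions inside $\M$, then to get (vii)$\Leftrightarrow$(iii) and (viii)$\Leftrightarrow$(ii) by rewriting semisingularity as convergence in the two GNS spaces, and finally to obtain (vi)$\Leftrightarrow$(i) from the monotonicity of the parallel sum. Throughout I would use the data assembled in the proof of Theorem~\ref{T:f:g}: that $\M$ (the closure of \eqref{E:M}) coincides with $\overline{\pi(\alg)(\zeta_f\oplus\zeta_g)}$, that $\M$ and $\M^{\perp}$ are $\pi$-invariant, and that $(f:g)(a^*a)=\dist^2(\pi_f(a)\zeta_f\oplus0;\M)$; I would also use that $\pi_f(\alg)\zeta_f$ is dense in $\hilf$ (so in particular $\zeta_f\in\overline{\pi_f(\alg)\zeta_f}$, and likewise for $g$), and the elementary remark that a representable functional vanishing on every $a^*a$ vanishes identically.

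For the geometric core: by the distance formula and the elementary remark, $f:g=0$ is equivalent to $\pi_f(a)\zeta_f\oplus0\in\M$ for all $a\in\alg$; since $\M$ is closed and $\pi$-invariant and $\zeta_f$ is a limit of vectors $\pi_f(a)\zeta_f$, this is equivalent to $\zeta_f\oplus0\in\M$, giving (i)$\Leftrightarrow$(ii), and (i)$\Leftrightarrow$(iii) follows from $f:g=g:f$. If $\zeta_f\oplus0\in\M$, then $\pi$-invariance forces $\hilf\oplus0\subseteq\M$, and subtracting $\pi_f(a)\zeta_f\oplus0$ from the generators $\pi_f(a)\zeta_f\oplus\pi_g(a)\zeta_g$ yields $0\oplus\hil_g\subseteq\M$, hence (iv); since (iv)$\Rightarrow$(ii) is trivial and $\M$ is the closed span of $\pi(\alg)(\zeta_f\oplus\zeta_g)$, we also get (ii)$\Leftrightarrow$(iv)$\Leftrightarrow$(v).

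For the semisingularity equivalences I would translate a witnessing sequence $\seq{a}$ for ``$g$ is $f$-semisingular'' into the GNS spaces: $f(a_n^*a_n)=\|\pi_f(a_n)\zeta_f\|_f^2\to0$ says $\pi_f(a_n)\zeta_f\to0$ in $\hilf$; $g((a_n-a_m)^*(a_n-a_m))\to0$ says $(\pi_g(a_n)\zeta_g)_n$ is Cauchy in $\hil_g$, hence convergent to some $\eta$; and $g(a_n^*a)=\sipg{\pi_g(a)\zeta_g}{\pi_g(a_n)\zeta_g}\to g(a)=\sipg{\pi_g(a)\zeta_g}{\zeta_g}$ for all $a$ forces $\eta=\zeta_g$ by density of $\pi_g(\alg)\zeta_g$. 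Hence a witnessing sequence exists exactly when the generators $\pi_f(a_n)\zeta_f\oplus\pi_g(a_n)\zeta_g$ can be driven to $0\oplus\zeta_g$, i.e.\ exactly when $0\oplus\zeta_g\in\M$; reading this chain both ways gives (iii)$\Leftrightarrow$(vii), and (ii)$\Leftrightarrow$(viii) follows by interchanging $f$ and $g$. (In particular this reproves, independently of \cite{szucssing}, that singularity and semisingularity coincide.)

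For mutual singularity: if $f:g=0$ and $h$ is a representable functional dominated by both $f$ and $g$, then $\tform_h\le\tform_f$ and $\tform_h\le\tform_g$, so the monotonicity of the parallel sum of forms (immediate from the infimum in Theorem~\ref{T:parallelsum_forms}) gives $\tform_h:\tform_h\le\tform_f:\tform_g=\tform_{f:g}$; since $(\tform_h:\tform_h)(a,a)=\tfrac12\tform_h(a,a)$ (take $y=a/2$ in the infimum) we obtain $\tfrac12 h(a^*a)\le(f:g)(a^*a)=0$ for all $a$, hence $h=0$, i.e.\ (i)$\Rightarrow$(vi). Conversely, by Theorem~\ref{T:f:g} the functional $h:=f:g$ is representable, and taking $b=0$ in Theorem~\ref{T:f:g} shows $(f:g)(a^*a)\le f(a^*a)$ and $(f:g)(a^*a)\le g(a^*a)$, so $f:g$ is dominated by both and mutual singularity forces $f:g=0$. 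The point I expect to need genuine care is precisely this interface between the relation ``dominated'' for representable functionals and the pointwise comparison of the associated forms, together with the dual fact that $f:g$ is dominated by each summand (the analogue of $A:B\le A,B$); pinning these down is where one invokes the Radon--Nikodym-type description of dominated representable functionals rather than merely the quadratic-form inequality. Everything else is routine bookkeeping with the GNS construction.
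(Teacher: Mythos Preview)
Your argument is correct and follows essentially the same route as the paper's, including the identity $h:h=\tfrac12 h$ (note that your hint ``take $y=a/2$'' only gives the inequality $\le$; the reverse comes from completing the square, exactly as the paper does). Your closing worry is unfounded: here ``$h$ is dominated by $f$'' simply means $h(a^*a)\le f(a^*a)$ for all $a$, so no Radon--Nikodym machinery is needed---the paper uses precisely the monotonicity of the parallel sum together with $(f:g)(a^*a)\le f(a^*a),\,g(a^*a)$, just as you do.
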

\begin{proof}
    In the proof of Theorem \ref{T:f:g} we have seen that
    \begin{equation*}
        (f:g)(a^*a)=\dist^2(\pi_f(a)\zeta_f\oplus0;\M).
    \end{equation*}
    Hence $f:g=0$ is equivalent with $\pi_f(a)\zeta_f\oplus0\in\M$ for all $a\in\alg$. Consequently, the equivalence of (i) and (ii) is clear by the $\pi_f$-cyclicity of $\zeta_f$ and by the $\pi_f\oplus\pi_g$-invariancy of $\M$. The same arguments show the equivalence of (i) and (iii). Observe immediately that (ii) implies $\hil_f\oplus\{0\}\subseteq\M$ and (iii) implies $\{0\}\oplus \hil_g\subseteq\M$. Thus (ii) together with (iii) imply (iv). The converse implication and  (iv)$\Leftrightarrow$(v) are obvious. Since $f:g$ is a representable functional such that  $f:g\leq f$ and $f:g\leq g$, we conclude that (vi) implies (i). Next we show that (i) implies (vi). Suppose therefore that $f:g=0$ and consider  a representable functional $h$ on $\alg$ such that $h\leq f,g$. It is clear then that $h:h\leq f:g=0$, that is $h:h=0$. At the same time, a direct calculation shows that $h:h=\frac12 h$ which yields $h=0$. Indeed, for $a,b\in\alg$ we find that
    \begin{gather*}
        h((a-b)^*(a-b))+h(b^*b)=h(a^*a)-h(b^*a)-h(a^*b)+2h(b^*b)\\
        =h(a^*a)+h\Big((\frac{1}{\sqrt{2}}a-\sqrt{2}b)^*(\frac{1}{\sqrt{2}}a-\sqrt{2}b)\Big)-\frac12h(a^*a).
    \end{gather*}
    Taking infimum in $b$ above yields $(h:h)(a^*a)=\frac{1}{2}h(a^*a)$, as it is claimed. Now we see that (i)-(vi) are equivalent. Assume now (iv) and choose $\seq{a}$ such that $\pi_f(a_n)\zeta_f\to0$ and $\pi_g(a_n)\zeta_g\to\zeta_g$. Then
    \begin{align*}
        f(a_n^*a_n)&=\|\pi_f(a_n)\zeta_f\|_f^2\to0,\\ \qquad g((a_n^*-a_m^*)(a_n-a_m))&=\|\pi_g(a_n)\zeta_g-\pi_g(a_m)\zeta_g\|_g^2\to0,
    \end{align*}
    and for $a\in\alg$
    \begin{equation*}
        g(a_n^*a)=\sipg{\pi_g(a)\zeta_g}{\pi_g(a_n)\zeta_g}\to\sipg{\pi_g(a)\zeta_g}{\zeta_g}=g(a),
    \end{equation*}
    which proves that $g$ is $f$-semi singular, i.e., (iv) implies (vii). A very similar argument shows that (iv) implies (viii). Conversely, suppose that $g$ is $f$-semi singular, and choose a corresponding sequence $\seq{a}$. As above, we see that $\pi_f(a_n)\zeta_f\to0$ and that the sequence $(\pi_g(a_n)\zeta_g)_{n\in\dupN}$ possesses the Cauchy property. Denoting the corresponding limit by $\zeta$, we claim that $\zeta=\zeta_g$. Indeed, for $a\in\alg$
    \begin{align*}
        \sipg{\pi_g(a)\zeta_g}{\zeta_g}&=g(a)=\limn g(a_n^*a)\\
        &=\limn \sipg{\pi_g(a)\zeta_g}{\pi_g(a_n)\zeta_g}=\sipg{\pi_g(a)\zeta_g}{\zeta},
    \end{align*}
    whence $\zeta=\zeta_g$, indeed. This means that (vii) implies (iii). The same argument shows that (viii) implies (ii). Hence the proof is complete.
\end{proof}

Suppose now that $\alg$ is a Banach $^*$-algebra and $f,g$ are representable positive (hence continuous) functionals on. Then one can associate positive operators $A,B$ of $\alg$ into  $\anti{\alg}$ due to  formulas
\begin{equation*}
    \dual{Aa}{b}:=f(b^*a),\qquad \dual{Ba}{b}:=g(b^*a),\qquad a,b\in\alg.
\end{equation*}
As $A,B$ are defined on the whole space it follows by the Hellinger--Toeplitz theorem (\cite[Theorem 2.1]{SSZT}) that $A,B$ are continuous, i.e., $A,B\in\baa$. We can consider therefore the parallel sum $A:B\in\baa$ of $A$ and $B$, according to Theorem \ref{T:parallelsum_banach}. The question therefore arises if there is any connection between $A:B$ and $f:g$. Below we attempt to analyze this relation.

To this aim we recall briefly a modified version of the GNS construction involving the associated positive operators. For the details the reader is referred to \cite[Theorem 5.3]{SSZT}. Consider the auxiliary Hilbert space $\hila$ obtained along the procedure of Section 4. For fixed $a\in\alg$ let us define a densely defined linear mapping $\pia(a)$ in $\hila$ due to the correspondence
\begin{equation*}
    \pia(a)(Ab):=A(ab),\qquad b\in\alg.
\end{equation*}
Inequality \eqref{E:representable} forces $\pia(a)$ to be continuous:
\begin{equation*}
    \sipa{A(ab)}{A(ab)}=\dual{A(ab)}{ab}=f(b^*a^*ab)\leq M_a f(b^*b)=M_a\sipa{Ab}{Ab}.
\end{equation*}
It is seen readily that $a\mapsto\pia(a)$ is a $^*$-representation of $\alg$ in the Hilbert space $\hila$. Furthermore, from inequality \eqref{E:cyclic} we infer that the correspondence
\begin{equation*}
    Aa\mapsto f(a)
\end{equation*}
defines a continuous linear functional of $\ran A\subseteq\hila$. Hence the Riesz-representation theorem yields a unique representing vector $\zeta_A\in\hila$ that satisfies
\begin{equation*}
    \sipa{Aa}{\zeta_A}=f(a),\qquad a\in\alg.
\end{equation*}
One also easily verifies that $\pia(a)\zeta_A=Aa$ whence
\begin{equation*}
    f(a)=\sipa{\pia(a)\zeta_A}{\zeta_A}, \qquad  a\in\alg.
\end{equation*}
Let us introduce the triple $(\hilb,\pib,\zeta_B)$ analogously. Then, by \eqref{E:J*hullamj}, we conclude that
\begin{equation*}
    \ran (J^*\circ j_{\alg})=\set{\pia(a)\zeta_A\oplus\pib(a)\zeta_B}{a\in\alg}.
\end{equation*}
Considering the orthogonal projection $P$ of $\hila\times\hilb$ onto $\M:=\ort{\ran (J^*\circ j_{\alg})}$ it can be shown along the argument of the proof of Theorem \ref{T:f:g} that $\M$ is $\pib\oplus\pia=:\pi$-invariant and that
\begin{equation}\label{E:f:g_alternatively}
    (f:g)(a)=\sip{\pi(a)P(\zeta_A\oplus0)}{P(\zeta_A\oplus0)}.
\end{equation}
\begin{lemma}\label{L:Banachcsillag}
    Let $\alg$ be a Banach $^*$-algebra with two representable positive functionals $f,g$ on it. Then  $f:g$ equals the conjugate of the anti-linear functional $\widetilde{J}_AP(\zeta_A\oplus0)\in\anti{\alg}$, that is,
    \begin{equation*}
        (f:g)(a)=\overline{\dual{\widetilde{J}_AP(\zeta_A\oplus0)}{a}},\qquad a\in\alg.
    \end{equation*}
\end{lemma}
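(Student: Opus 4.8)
The plan is to read $f:g$ off from the alternative description \eqref{E:f:g_alternatively}, namely $(f:g)(a)=\sip{\pi(a)P(\zeta_A\oplus0)}{P(\zeta_A\oplus0)}$, and then to transport the operator $\pi(a)$ out of the inner product onto the adjoint side, so as to recognise the right-hand side as the conjugate of $\dual{\widetilde{J}_AP(\zeta_A\oplus0)}{a}$. The hinge of the argument is the identity
\begin{equation*}
    \pi(a)(\zeta_A\oplus0)=Aa\oplus0=(\widetilde{J}_A^*\circ j_{\alg})(a),
\end{equation*}
whose first equality holds because $\pia(a)\zeta_A=Aa$ while the second summand of $\zeta_A\oplus0$ vanishes, and whose second equality is \eqref{E:J*hullamj}.

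First I would record that $P$ commutes with every $\pi(a)$: since $\M$ and $\M^{\perp}$ are both $\pi$-invariant (as noted just before the lemma, using that $\pi$ is a $^*$-representation), decomposing an arbitrary vector as $\xi=P\xi+(I-P)\xi$ and applying $\pi(a)$ gives $P\pi(a)=\pi(a)P$. Using this, together with the facts that $\pi(a)P(\zeta_A\oplus0)\in\M$ is orthogonal to $(I-P)(\zeta_A\oplus0)\in\M^{\perp}$ (so that $P(\zeta_A\oplus0)$ may be replaced by $\zeta_A\oplus0$ in the second slot), that $P$ is self-adjoint, and the hinge identity above, one computes
\begin{align*}
    (f:g)(a)&=\sip{\pi(a)P(\zeta_A\oplus0)}{P(\zeta_A\oplus0)}=\sip{\pi(a)P(\zeta_A\oplus0)}{\zeta_A\oplus0}\\
            &=\sip{P\pi(a)(\zeta_A\oplus0)}{\zeta_A\oplus0}=\sip{\pi(a)(\zeta_A\oplus0)}{P(\zeta_A\oplus0)}\\
            &=\sip{(\widetilde{J}_A^*\circ j_{\alg})(a)}{P(\zeta_A\oplus0)}_{\hila\times\hilb}.
\end{align*}
To conclude, I would invoke the canonical property of the adjoint in the Banach-space setting --- the analogue of the relation $\sipa{h}{J_A^*x}=\sip{J_Ah}{x}$ underlying Section~2, combined with \eqref{E:J*hullamj} and the defining formula $\widehat{a}(h)=\overline{h(a)}$ of the isometry $j_{\alg}$ --- which reads $\sip{(\widetilde{J}_A^*\circ j_{\alg})(a)}{\xi}_{\hila\times\hilb}=\overline{\dual{\widetilde{J}_A\xi}{a}}$ for every $\xi\in\hila\times\hilb$. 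Applying it with $\xi=P(\zeta_A\oplus0)$ turns the last display into $\overline{\dual{\widetilde{J}_AP(\zeta_A\oplus0)}{a}}$, which is the asserted identity.

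The only step that genuinely requires care is the bookkeeping of complex conjugates and of which argument of the inner product is linear, since both enter when one passes between $\widetilde{J}_A$ and $\widetilde{J}_A^*\circ j_{\alg}$ through $j_{\alg}$; there is no analytic content, the proof being a short chain of algebraic identities resting on \eqref{E:f:g_alternatively} and the commutation of $P$ with $\pi$.
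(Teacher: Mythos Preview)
Your proof is correct and follows essentially the same route as the paper's: start from \eqref{E:f:g_alternatively}, use the $\pi$-invariance of $\M$ and $\M^{\perp}$ to strip a copy of $P$, apply the hinge identity $\pi(a)(\zeta_A\oplus0)=(\widetilde{J}_A^*\circ j_{\alg})(a)$, and finish with the adjoint relation. The only cosmetic difference is that the paper goes directly from $\sip{\pi(a)P(\zeta_A\oplus0)}{P(\zeta_A\oplus0)}$ to $\sip{\pi(a)(\zeta_A\oplus0)}{P(\zeta_A\oplus0)}$ in one step (using only the invariance of $\ran(I-P)$), whereas you detour through the second slot and the full commutation $P\pi(a)=\pi(a)P$; this is harmless extra bookkeeping.
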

\begin{proof}
    Let $a\in\alg$, then we have by \eqref{E:f:g_alternatively} and \eqref{E:J*hullamj}
    \begin{align*}
        (f:g)(a)&=\sip{\pi(a)P(\zeta_A\oplus0)}{P(\zeta_A\oplus0)}=\sip{\pi(a)(\zeta_A\oplus0)}{P(\zeta_A\oplus0)}\\
                &=\sip{Aa\oplus0}{P(\zeta_A\oplus0)} =\sip{(\widetilde{J}_A^*\circ j_{\alg})(a)}{P(\zeta_A\oplus0)}\\
                &=\overline{\dual{\widetilde{J}_AP(\zeta_A\oplus0)}{a}},
    \end{align*}
    as it is claimed.
\end{proof}
In the next two results we are going to point out the deep connection between the parallel sums of representable positive functionals and of their associated positive operators under the additional conditions being $\alg$ either unital or approximatively unital Banach $^*$-algebra.
\begin{theorem}
    Let $\alg$ be a unital Banach $^*$-algebra with positive (hence representable, see \cite{palmer}) functionals $f,g$ on it. Then we have $f:g=\overline{(A:B)1}$:
    \begin{equation*}
        (f:g)(a)=\overline{\dual{(A:B)1}{a}},\qquad a\in\alg.
    \end{equation*}
\end{theorem}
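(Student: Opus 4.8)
The plan is to compute $(A:B)1$ directly from the factorization furnished by Theorem~\ref{T:parallelsum_banach} and then to recognize the outcome as the conjugate of the functional $f:g$ via Lemma~\ref{L:Banachcsillag}. Concretely, since $A:B=\widetilde{J}_AP\widetilde{J}_A^*\circ j_{\alg}$, where $P$ is the orthogonal projection of $\hila\times\hilb$ onto $\M=\ort{\ran(J^*\circ j_{\alg})}$, I would first invoke \eqref{E:J*hullamj} (with $E=\alg$) to write
\begin{equation*}
    (A:B)1=\widetilde{J}_AP\bigl((\widetilde{J}_A^*\circ j_{\alg})(1)\bigr)=\widetilde{J}_AP(A1\oplus0).
\end{equation*}

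The key step --- and essentially the only one that is not pure bookkeeping --- is to identify $A1\in\ran A\subseteq\hila$ with the representing vector $\zeta_A$. For this I would note that $\zeta_A$ is by construction the unique vector of $\hila$ with $\sipa{Aa}{\zeta_A}=f(a)$ for all $a\in\alg$, while on the other hand, using $1^*=1$ in the unital $^*$-algebra $\alg$,
\begin{equation*}
    \sipa{Aa}{A1}=\dual{Aa}{1}=f(1^*a)=f(a),\qquad a\in\alg.
\end{equation*}
Since $\ran A$ is dense in $\hila$, these two identities force $A1=\zeta_A$, hence $(A:B)1=\widetilde{J}_AP(\zeta_A\oplus0)$.

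Finally, combining this with Lemma~\ref{L:Banachcsillag} yields, for every $a\in\alg$,
\begin{equation*}
    \overline{\dual{(A:B)1}{a}}=\overline{\dual{\widetilde{J}_AP(\zeta_A\oplus0)}{a}}=(f:g)(a),
\end{equation*}
which is the desired identity $f:g=\overline{(A:B)1}$. I do not expect a genuine obstacle here: the argument is a short direct computation, and the only mild subtlety is the passage to the completion $\hila$ when matching $A1$ with $\zeta_A$ (one uses density of $\ran A$ together with the fact that both vectors induce the same bounded functional $Aa\mapsto f(a)$ on $\ran A$), along with keeping careful track of the conjugations coming from the anti-duality and from the canonical embedding $j_{\alg}$.
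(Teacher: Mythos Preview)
Your proposal is correct and follows essentially the same route as the paper: identify $\zeta_A=A1$ in the unital case, use \eqref{E:J*hullamj} together with the factorization $A:B=\widetilde{J}_AP\widetilde{J}_A^*\circ j_{\alg}$ from Theorem~\ref{T:parallelsum_banach}, and then invoke Lemma~\ref{L:Banachcsillag}. The paper simply states $\zeta_A=A1$ as an immediate observation and chains the identities in the reverse direction, but the argument is the same.
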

\begin{proof}
    Observe immediately that $\zeta_A=A1$ in this case. Consequently, by Lemma \ref{L:Banachcsillag}, identity \eqref{E:J*hullamj} and Theorem \ref{T:parallelsum_banach} we conclude that
    \begin{align*}
        (f:g)(a)=\overline{\dual{\widetilde{J}_AP(A1\oplus0)}{a}}=\overline{\dual{\widetilde{J}_AP(\widetilde{J}_A^*\circ j_{\alg})(1)}{a}}=\overline{\dual{(A:B)1}{a}},
    \end{align*}
    as stated.
\end{proof}
\begin{theorem}
    Let $\alg$ be an approximately  unital Banach $^*$-algebra with approximate unit $(e_i)_{i\in I}$ and with positive (hence representable, see \cite{palmer}) functionals $f,g$ on it. Then  $\displaystyle f:g=\lim_{i,I}\overline{(A:B)e_i}$, where the limit is taken in the functional norm.
\end{theorem}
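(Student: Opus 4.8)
The plan is to mimic the reduction used in the unital case just treated: I will express both $f:g$ and each $\overline{(A:B)e_i}$ as the conjugate of the image under the \emph{fixed} bounded operator $\widetilde{J}_AP$ of a single vector of $\hila$, so that convergence in functional norm is dominated by convergence of those vectors in $\hila$, and then the whole content collapses to showing $Ae_i\to\zeta_A$ in $\hila$. Concretely, by Theorem~\ref{T:parallelsum_banach} together with \eqref{E:J*hullamj} we have $(A:B)e_i=(\widetilde{J}_AP\widetilde{J}_A^*\circ j_{\alg})(e_i)=\widetilde{J}_AP(Ae_i\oplus0)$, while Lemma~\ref{L:Banachcsillag} gives $f:g=\overline{\dual{\widetilde{J}_AP(\zeta_A\oplus0)}{\cdot}}$.

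Subtracting these, and using that $\widetilde{J}_A$ and $P$ are bounded with $\|P\|\le1$ and $\|\widetilde{J}_A\|\le\|A\|^{1/2}$, we get for every $a\in\alg$ with $\|a\|\le1$
\begin{equation*}
\bigl|(f:g)(a)-\overline{\dual{(A:B)e_i}{a}}\bigr|
=\bigl|\dual{\widetilde{J}_AP\bigl((\zeta_A-Ae_i)\oplus0\bigr)}{a}\bigr|
\le\bigl\|\widetilde{J}_AP\bigl((\zeta_A-Ae_i)\oplus0\bigr)\bigr\|_{\anti{\alg}}
\le\|A\|^{1/2}\,\|Ae_i-\zeta_A\|_{\hila}.
\end{equation*}
Taking the supremum over such $a$ yields $\|\,\overline{(A:B)e_i}-f:g\,\|\le\|A\|^{1/2}\|Ae_i-\zeta_A\|_{\hila}$, so the theorem reduces to proving $\lim_{i,I}\|Ae_i-\zeta_A\|_{\hila}=0$. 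To establish this I would work inside the modified GNS data $(\hila,\pia,\zeta_A)$ recalled before Lemma~\ref{L:Banachcsillag}, where $\pia(a)\zeta_A=Aa$ for every $a$ (hence $Ae_i=\pia(e_i)\zeta_A$) and $\zeta_A\in\hila=\overline{\ran A}$ by the very construction of $\hila$. Given $\varepsilon>0$, pick $c\in\alg$ with $\|Ac-\zeta_A\|_{\hila}<\varepsilon$. Since $(e_i)$ is an approximate unit, $e_ic-c\to0$ in $\alg$, so
\begin{equation*}
\|\pia(e_i)(Ac)-Ac\|_{\hila}^2=\|A(e_ic)-Ac\|_{\hila}^2=\dual{A(e_ic-c)}{e_ic-c}=f\bigl((e_ic-c)^*(e_ic-c)\bigr)\le\|f\|\,\|e_ic-c\|^2\longrightarrow0.
\end{equation*}
Moreover the net $(\pia(e_i))_{i\in I}$ is uniformly bounded, say by $K$, because the $^*$-representation $\pia$ attached to a representable positive functional on a Banach $^*$-algebra is norm continuous (cf.~\cite{palmer}, \cite{SSZT}) and $(e_i)$ is bounded. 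Then
\begin{equation*}
\|Ae_i-\zeta_A\|_{\hila}\le\|\pia(e_i)(\zeta_A-Ac)\|_{\hila}+\|A(e_ic)-Ac\|_{\hila}+\|Ac-\zeta_A\|_{\hila}\le(K+1)\varepsilon+\|A(e_ic)-Ac\|_{\hila},
\end{equation*}
and letting $i$ run past the point where $\|A(e_ic)-Ac\|_{\hila}<\varepsilon$ gives $\|Ae_i-\zeta_A\|_{\hila}<(K+2)\varepsilon$ eventually; as $\varepsilon$ was arbitrary, $Ae_i\to\zeta_A$ in $\hila$, and with the estimate above this proves $f:g=\lim_{i,I}\overline{(A:B)e_i}$ in functional norm.

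The step I expect to carry the weight is the norm convergence $Ae_i\to\zeta_A$, and inside it the one non-formal ingredient, namely the uniform bound $\sup_i\|\pia(e_i)\|<\infty$ --- equivalently, automatic continuity of the GNS representation $\pia$ of the representable functional $f$. If one prefers to avoid invoking that, one can first show $Ae_i\to\zeta_A$ \emph{weakly} in $\hila$ (the net $\|Ae_i\|_{\hila}^2=f(e_i^*e_i)$ is bounded, and $\sipa{Ae_i}{Ac}=\dual{Ae_i}{c}=f(c^*e_i)\to f(c^*)=\overline{f(c)}=\sipa{\zeta_A}{Ac}$ for every $c$, using continuity and Hermiticity of $f$), and then upgrade to norm convergence by proving $f(e_i^*e_i)\to\|\zeta_A\|_{\hila}^2$; the estimate $\limsup_i f(e_i^*e_i)\le\|\zeta_A\|_{\hila}^2$ is then where the difficulty relocates.
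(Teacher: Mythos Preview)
Your argument is correct and is essentially the paper's own proof: both reduce the statement, via $(A:B)e_i=\widetilde{J}_AP(Ae_i\oplus0)$ and Lemma~\ref{L:Banachcsillag}, to the convergence $Ae_i=\pia(e_i)\zeta_A\to\zeta_A$ in $\hila$, and both obtain the latter from density of $\ran A$ in $\hila$, the norm boundedness of approximate units, and the continuity of representations of Banach $^*$-algebras. Your write-up merely makes the strong-convergence step explicit with a triangle-inequality estimate, and your closing remark on the weak-convergence alternative is additional commentary rather than a different proof.
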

\begin{proof}
    As $\displaystyle\lim_{i,I} e_ia=a$ holds for any $a\in\alg$ we conclude by continuity  that
    \begin{equation*}
        \lim_{i,I}\pia(e_i)(Aa)=\lim_{i,I}(J_A^*\circ j_{\alg})(e_ia)=(J_A^*\circ j_{\alg})(a)=Aa.
    \end{equation*}
    Note that each approximative unit is norm bounded and each representation of a Banach $^*$-algebra is continuous. Hence, being $\ran A\subseteq\hila$ dense, we conclude that $(\pia(e_i))_{i\in I}$ converges strongly to the identity operator of $\hila$. In particular, $\displaystyle\lim_{i,I} \pia(e_i)\zeta_A=\zeta_A.$ Consequently,
    \begin{equation*}
        \lim_{i,I} P\pi(e_i)(\zeta_A\oplus0)=P(\zeta_A\oplus0),
    \end{equation*}
    whence we infer that
    \begin{align*}
        \lim_{i,I}\overline{(A:B)e_i}&=\lim_{i,I}\overline{\widetilde{J}_AP(\widetilde{J}_A^*\circ j_{\alg})(e_i)}=\lim_{i,I}\overline{\widetilde{J}_AP\pi(e_i)(\zeta_A\oplus0)}\\
                                      &=\overline{\widetilde{J}_AP(\zeta_A\oplus0)}=f:g,
    \end{align*}
   in functional norm, according to Lemma \ref{L:Banachcsillag}.
\end{proof}

\end{document}